\documentclass[11pt, a4paper]{article}
\usepackage{amsmath, amsfonts}
\usepackage[usenames]{color}
\usepackage[latin1]{inputenc}
\usepackage[table, dvipsnames]{xcolor}
\usepackage{array}
\newcolumntype{P}[1]{>{\centering\arraybackslash}p{#1}}
\newcolumntype{M}[1]{>{\centering\arraybackslash}m{#1}}
\usepackage{multirow}
\usepackage[all]{xy}
\usepackage{mathtools}
\usepackage{mathrsfs}
\usepackage{hyperref} 
\hypersetup{bookmarks = true, 
                  colorlinks = true, 
                  linkcolor = blue, 
                  citecolor = blue, 
                  }
\openup4pt

\newcommand{\xdownarrow}[1]{%
  {\left\downarrow\vbox to #1{}\right.\kern-\nulldelimiterspace}
}

\newcommand{\xuparrow}[1]{%
  {\left\uparrow\vbox to #1{}\right.\kern-\nulldelimiterspace}
}

\oddsidemargin=0.6cm \textwidth=16cm \textheight=24.7cm
\usepackage{graphicx}
\usepackage{amssymb}
\usepackage{amsmath}

\newtheorem{theorem}{Theorem}[section]

\newtheorem{conjecture}[theorem]{Conjecture}
\newtheorem{corollary}[theorem]{Corollary}

\newtheorem{definition}[theorem]{Definition}
\newtheorem{example}[theorem]{Example}

\newtheorem{lemma}[theorem]{Lemma}

\newtheorem{proposition}[theorem]{Proposition}

\setlength{\topmargin}{-2.4cm} \setlength{\oddsidemargin}{-1cm}
\setlength{\evensidemargin}{-1cm} \setlength{\textwidth}{18cm}
\setlength{\textheight}{26cm}
\newenvironment{proof}[1][Proof]{\noindent\textbf{#1.} }{\ \rule{0.5em}{0.5em}}

\title{\textbf{On invariants of a map germ from n-space to 2n-space}}
\author{ \ \ \ \\{Nuño-Ballesteros, J.J.  ,  $ \ \ $ Silva, O.N. $ \ \ $ and $ \ \ $ Tomazella, J.N.}}
\date{}

\begin{document}

\maketitle

\begin{abstract}
We consider $\mathcal{A}$-finite map germs $f$ from $(\mathbb{C}^n,0)$ to $(\mathbb{C}^{2n},0)$. First, we show that the number of double points that appears in a stabilization of $f$, denoted by $d(f)$, can be calculated as the length of the local ring of the double point set $D^2(f)$ of $f$, given by the Mond's ideal. In the case where $n\leq 3$ and $f$ is quasihomogeneous, we also present a formula to calculate $d(f)$ in terms of the weights and degrees of $f$. Finally, we consider an unfolding $F(x,t) = (f_t(x),t)$ of $f$ and we find a set of invariants whose constancy in the family $f_t$ is equivalent to the Whitney equisingularity of $F$. As an application, we present a formula to calculate the Euler obstruction of the image of $f$. 
\end{abstract}

\section{Introduction}

$ \ \ \ \ $ Our purpose in this work is to study $\mathcal{A}$-finite (or equivalently, finitely determined) map germs from $(\mathbb{C}^n,0)$ to $(\mathbb{C}^{2n},0)$. Many authors have studied this type of maps from several different points of view, such as maximal deformations and theory of classification, as in the search for topological and analytical invariants. In the literature we can cite for instance the work of Gaffney in \cite{gaffney2}, Klotz, Pop and Rieger in \cite{klotz-pop-rieger} and more recently Rodrigues Hernandes and Ruas in \cite{ruaselenice2} and \cite{ruaselenice}. 

It is well known that if $f:(\mathbb{C}^n,0)\rightarrow (\mathbb{C}^{2n},0)$ is $\mathcal{A}$-finite then the number of transverse double points which appears in a stabilization of $f$, denoted by $d(f)$, is finite and it is an analytic invariant of $f$. In this work, we are interested in studying a way of calculating the invariant $d(f)$ in terms of $f$ itself.  There are some ways already known in the literature to calculate this invariant. For instance, suppose that $f:(\mathbb{C}^n,0)\rightarrow (\mathbb{C}^{2n},0)$ is 
a map germ of corank $1$. In this case, after a suitable change of coordinates, we can write $f$ as 

\begin{equation}\label{eq8}
f(\textbf{x},y)=(x_1,\cdots,x_{n-1},f_n(\textbf{x},y),\cdots,f_{2n}(\textbf{x},y)) 
\end{equation}

\noindent where $\textbf{x}=(x_1,\cdots,x_{n-1})$. So, adding a new variable $y^{'}$, $d(f)$ can be calculated as

\begin{equation}\label{eq7}
d(f)=\dfrac{1}{2} \displaystyle \left( dim_{\mathbb{C}}\dfrac{\mathcal{O}_{n+1}}{ \left\langle (f_n(\textbf{x},y)-f_n(\textbf{x},y^{'}))/(y-y^{'}), \cdots, (f_{2n}(\textbf{x},y)-f_{2n}(\textbf{x},y^{'}))/(y-y^{'})\right\rangle } \right)
\end{equation}

\noindent since in this case, the Mond's ideal of the double point set of $f$ is a complete intersection (see \cite{mond89}).

For $f(\textbf{x})=(f_1(\textbf{x}),\cdots,f_{2n}(\textbf{x})): (\mathbb{C}^n,0)\rightarrow (\mathbb{C}^{2n},0)$ of any corank set $\textbf{x}^{'}=(x_1^{'},\cdots,x_n^{'})$. There is a double-point formula

\begin{equation}\label{eq10}
\epsilon(f)=2d(f)=dim_{\mathbb{C}}\dfrac{\langle x_1-x_1^{'},\cdots,x_n-x_n^{'}  \rangle }{\langle f_1(\textbf{x})-f_1(\textbf{x}^{'}),\cdots,f_{2n}(\textbf{x})-f_{2n}(\textbf{x}^{'}) \rangle }
\end{equation}

\noindent given by Artin and Nagata in \cite[p. 320]{artinnagata}.

We also have the work of Gaffney (see \cite[Cor. 3.3]{gaffney2}). In his result, he showed how one can calculate $d(f)$ in terms of an appropriated Segre number of the double point ideal of $f$ and the number of cross-caps that appears in a stabilization of $\pi \circ f$, where $\pi:(\mathbb{C}^{2n},0)\rightarrow (\mathbb{C}^{2n-1},0)$ is a generic linear projection from $(\mathbb{C}^{2n},0)$ to $(\mathbb{C}^{2n-1},0)$.

In this work, following \cite{mond87} we present the double point ideal $I^2(f)$ of a finite map germ $f:(\mathbb{C}^n,0)\rightarrow (\mathbb{C}^p,0)$, $n\leq p$, which gives an appropriated analytic structure for the double point set of $f$. Using this analytic structure, we show (see Proposition \ref{mainresult1}) that if $f:(\mathbb{C}^n,0)\rightarrow (\mathbb{C}^{2n},0)$ is $\mathcal{A}$-finite of any corank, then $d(f)$ is given as the codimension of the double point ideal $I^{2}(f)$ in $\mathcal{O}_{2n}$.

Our second result is about quasihomogeneous map germs. The case in f has corank $1$ is well known. More precisely, suppose that $f$ has corank $1$ and it is written as in (\ref{eq8}) and it is quasihomogeneous of weights $w_1,\cdots,w_n$ and weighted
degrees $d_1,\cdots d_{2n}$. Applying an appropriated version of Bézout's theorem in (\ref{eq7}) with $w_n$ as the weight of the additional variable $y^{'}$, it is well known that $d(f)$ can be calculated as

\begin{equation}\label{eq9}
d(f)=\dfrac{\prod_{i=n}^{2n}(d_i-w_n)}{2w_n^2\prod_{j=1}^{n-1}w_j}.
\end{equation}

When the corank of $f$ is greater than $1$, $D^2(f)$ is no longer a complete intersection. Hence, in this case we cannot apply directly Bézout's theorem to find a formula. However, using another technique, when $n=2$ or $3$ we extend formula (\ref{eq9}) for any corank (see Proposition \ref{quasihomformula}) and we present a conjecture for the case $n>3$. 

We also present another way to calculate $d(f)$ in terms of the $\epsilon$-invariant for isolated non-normal singularities introduced by Greuel in \cite[Sec. 5]{greuel2}. Our presentation of this result (Proposition \ref{mainresult4}) actually can be seen as another version of Artin-Nagata formula, with a point of view of isolated non-normal singularities.

We are also interested in studying invariants that control the Whitney equisingularity of a deformation of $f$. We consider an unfolding $F:(\mathbb{C}^n \times \mathbb{C},0)\rightarrow (\mathbb{C}^{2n}\times \mathbb{C},0)$, $F(\textbf{x},t)=(f_t(\textbf{x}),t)$, and we assume that it is origin preserving, which means that $f_t(0)=0$ for any $t$. 
Gaffney in \cite[Sec. 7]{gaffney} showed that the Whitney equisingularity, hence the topological triviality, of a 1-parameter family of map germs from $(\mathbb{C}^n,0)$ to $(\mathbb{C}^p,0)$ is controlled by the multiplicities of the local polar varieties and the zero stable invariants of all the stable types which appear in the source and in the target. In the case where $f$ has corank $1$, Jorge Pérez and Saia give presented in \cite[Th. 4.3]{victorsaia} a reduction in the number of invariants that control Whitney equisingularity. In the case where $p=2n$, we reduced the number of invariants needed to control Whitney equisingularity of $F$ in the main result of \cite[Sec. 7]{gaffney}, with no restriction on the corank of $f$. 

Finally, as an application of some of our results, we show (Corollary \ref{mainresult5}) that the Euler obstruction (introduced by R. MacPherson in \cite{macpherson}) of $f(\mathbb{C}^n)$, which we denote by $Eu_0(f(\mathbb{C}^n))$, can be calculated in terms of the Milnor number of a single hypersurface in the source of $f$. More precisely, we show that 

\begin{center}
$Eu_0(f(\mathbb{C}^n))=1-(-1)^n\mu(f^{-1}(H \cap f(\mathbb{C}^n))$, 
\end{center}

\noindent where $H$ is a generic hyperplane in $\mathbb{C}^{2n}$ and $\mu(X)$ denotes the Milnor number of $X$ at $0$. As a consequence, we conclude that the local Euler obstruction of $f_t(\mathbb{C}^n)$ is an invariant for the Whitney equisingularity of $F$. 



\section{Preliminaries}\label{sec1}

$ \ \ \ \ $ Throughout this paper, $\textbf{x}=(x_1,\cdots,x_n)$ and $\textbf{X}=(X_1,\cdots,X_{2n})$ will be used to denote systems of coordinates in $\mathbb{C}^n$ and $\mathbb{C}^{2n}$, respectively. Also, $\mathbb{C} \lbrace x_1,\cdots,x_n \rbrace \simeq \mathcal{O}_n$ denotes the local ring of convergent power series in $n$ variables. The letters $U,V$ and $T$ are used to denote open neighborhoods of $0$ in $\mathbb{C}^n$, $\mathbb{C}^{2n}$ and $\mathbb{C}$, respectively. We also use the standard notation of singularity theory as the reader can find in Wall's survey paper \cite{wall} (see also \cite{juanjomond}).

We recall the notion of $\mathcal{A}$-finite determinacy, where $\mathcal{A}$ is the group of coordinates change in the source and in the target, as defined in \cite{wall}. 

\begin{definition}\label{def a equi}(a) Two map germs $f,g:(\mathbb{C}^n,0)\rightarrow (\mathbb{C}^{p},0)$ are $\mathcal{A}$-equivalent, denoted by $g\sim_{\mathcal{A}}f$, if there exist germs of diffeomorphisms $\eta:(\mathbb{C}^n,0)\rightarrow (\mathbb{C}^n,0)$ and $\xi:(\mathbb{C}^{p},0)\rightarrow (\mathbb{C}^{p},0)$, such that $g=\xi \circ f \circ \eta$.\\

\noindent (b) A map germ $f:(\mathbb{C}^n,0) \rightarrow (\mathbb{C}^{p},0)$ is finitely determined (with respect to the group $\mathcal{A}$) if there exists a positive integer $k$ such that for any $g$ with $k$-jets satisfying $j^kg(0)=j^kf(0)$ we have $g \sim_{\mathcal{A}}f$.
\end{definition}

Consider a finite map germ $f:(\mathbb{C}^n,0)\rightarrow (\mathbb{C}^{2n},0)$. By Mather-Gaffney criterion (\rm\cite[Th. 2.1]{wall}), $f$ is finitely determined (equivalently $\mathcal{A}$-finite) if and only if there is a finite representative $f:U \rightarrow V$ such that $f^{-1}(0)=\lbrace 0 \rbrace$ and the restriction $f:U \setminus \lbrace 0 \rbrace \rightarrow V \setminus \lbrace 0 \rbrace$ is stable. This means that the only singularities of $f$ on $U \setminus \lbrace 0 \rbrace$ are isolated transverse double points. By shrinking $U$ if necessary, we can assume that there are no transverse double points in $U$. Hence, $f$ is $\mathcal{A}-$finite if and only if the image of $f$ has isolated singularity. Also, it is not hard to see that $f$ is $\mathcal{A}$-finite if it is an injective immersion outside the origin. 

A $1$-parameter unfolding of $f$ is a map germ $F:(\mathbb{C}^n \times \mathbb{C},0)\rightarrow(\mathbb{C}^{2n} \times \mathbb{C},0)$ of the form $F(x,t) = (f_t(x),t)$ such that $f_0 = f$. We say that an unfolding $F$ is a stabilization of $f$ if there is a representative $F : F^{-1}(V \times T) \rightarrow V \times T$, where $V$ and $T$ are open neighborhoods of $0$ in $\mathbb{C}^{2n}$ and $\mathbb{C}$ respectively, such that $f_t : U_t \rightarrow V$ is stable for any $t \in U_t \setminus 0$, where $U_t:=F^{-1}(V \times \lbrace t \rbrace)$. Since we are in the nice dimensions of Mather (\rm\cite[p. 208]{mather}), we can take a stabilization of a finitely determined map germ $f:(\mathbb{C}^n,0)\rightarrow (\mathbb{C}^{2n},0)$ (see \cite{marar93} for details of a construction of a stabilization for $f$). It is well known that the number of transverse double points of $f_t$, with $t\neq 0$, is independent of $t$ and of the particular choice of the stabilization. Then, we define

\begin{center}
$d(f):=$ the number of transverse double points of $f_t$, $ \ \ $, with $t\neq 0$.
\end{center}

\begin{example} Consider the map germ $f:(\mathbb{C},0)\rightarrow (\mathbb{C}^2,0)$ defined by $f(u)=(u^2,u^3)$. A stabilization of $f$ is given by $F=(f_t(u),t)=(u^2,u^3+tu,t)$ is a stabilization of $f$. In this case $d(f)=1$ which is illustrated in Figure \rm\ref{figura1}.

\begin{figure}[h]
\centering
\includegraphics[scale=0.4]{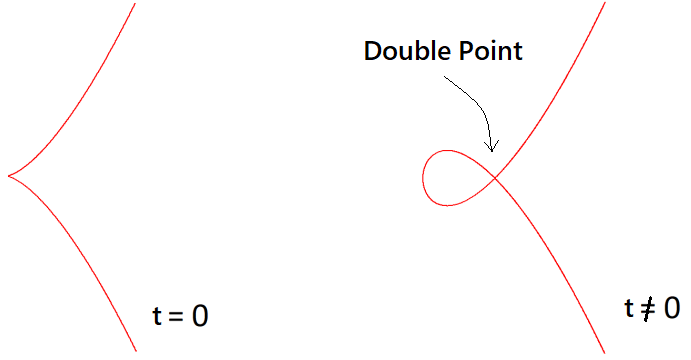} 
\caption{Double point in the stabilization of $f$ (real points)}\label{figura1}
\end{figure}

\end{example}

Multiple point spaces of a map germ from $(\mathbb{C}^n,0)$ to $(\mathbb{C}^p,0)$ with $n\leq p$ play an important role in the study of its geometry. In this section, we will deal only with double points, which will be fundamental to control the notion of good and excellent unfoldings which we will see later.  

We are interested in studying the space of double points, which is denoted by $D^2(f)$. Roughly speaking, $D^2(f)$ is the set of points $(\textbf{x},\textbf{x}^{'}) \in \mathbb{C}^n \times \mathbb{C}^n$ such that $\textbf{x} \neq \textbf{x}^{'}$ and $f(\textbf{x})=f(\textbf{x}^{'})$ or $\textbf{x}$ is a singular point of $f$. In order to see $D^2(f)$ as an analytic space, we need to present an appropriated analytic structure for it. We will follow the construction of \cite{mond87}, which is valid for holomorphic maps from $\mathbb{C}^n$ to $\mathbb{C}^p$, with $n\leq p$. 

Let us denote the diagonals of $\mathbb{C}^n \times \mathbb{C}^n$ and $\mathbb{C}^{p} \times \mathbb{C}^{p}$ by $\Delta_{\mathbb{C}^n}$ and $\Delta_{\mathbb{C}^{p}}$, respectively, and denote the sheaves of ideals defining them by $\mathcal{I}_n$ and $\mathcal{I}_{p}$, respectively. We write the points of $\mathbb{C}^n \times \mathbb{C}^n$ and $\mathbb{C}^{p} \times \mathbb{C}^{p}$ as $(\textbf{x},\textbf{x}^{'})$ and $(\textbf{X},\textbf{X}^{'})$, respectively.
Locally, 

\begin{center}
$\mathcal{I}_{n}=\langle x_1-x_1^{'},\cdots, x_n-x_n^{'} \rangle$ and $\mathcal{I}_{p}=\langle X_1-X_1^{'},\cdots, X_{p}-X_{p}^{'} \rangle$. 
\end{center}

Since the pull-back $(f \times f)^{\ast}\mathcal{I}_{p}$ is contained in $\mathcal{I}_{n}$ , there exist $\alpha_{ij}\in \mathcal{O}_{\mathbb{C}^{n} \times \mathbb{C}^n}$, such that
\[
f_{i}(\textbf{x})-f_{i}(\textbf{x}^{'})= \alpha_{i1}(\textbf{x},\textbf{x}^{'})(x_1-x_1^{'})+ \cdots + \alpha_{in}(\textbf{x},\textbf{x}^{'})(x_n-x^{'}_n), \ for \ i=1,\cdots,p.
\]

If $f(\textbf{x})=f(\textbf{x}^{'})$ and $\textbf{x} \neq \textbf{x}^{'}$, then every $n \times n$ minor of the matrix 

\begin{equation}\label{eq15}
\alpha=(\alpha_{ij})
\end{equation}

\noindent must vanish at $(x,x^{'})$. We denote by $\mathcal{R}(\alpha)$ the ideal in $\mathcal{O}_{\mathbb{C}^{p}}$ generated by the $n\times n$ minors of $\alpha$. Then we define the \textit{double point space $D^2(f)$} (as a complex space) by
\[
D^{2}(f)=V((f\times f)^{\ast}\mathcal{I}_{p}+\mathcal{R}(\alpha)).
\]

We call \textit{double point ideal} the ideal $\mathcal{I}^{2}(f)=\langle(f\times f)^{\ast}\mathcal{I}_{p}+\mathcal{R}(\alpha)\rangle$. Although the ideal $\mathcal{R}(\alpha)$ depends on the choice of the coordinate functions of $f$, in \cite{mond87} it is proved that $\mathcal{I}^{2}(f)$ does not, and so $D^{2}(f)$ is well defined. It is easy to see that the points in the underlying set of $D^{2}(f)$ are exactly the ones of type $(\textbf{x},\textbf{x}^{'})$ with $\textbf{x} \neq \textbf{x}^{'}$, $f(\textbf{x})=f(\textbf{x}^{'})$ and the ones of type $(\textbf{x},\textbf{x})$ such that $\textbf{x}$ is a singular point of $f$.

Let $f:(\mathbb{C}^n,0)\rightarrow(\mathbb{C}^{p},0)$ be a finite map germ and take a representative of $f$ defined on a small enough open neighbourhood of the origin. Denote by $I_{p}$ and $R(\alpha)$ the stalks at $0$ of $\mathcal{I}_{p}$ and $\mathcal{R}(\alpha)$. We define the \textit{the double point space of the map germ $f$} as the complex space germ 

\[
D^{2}(f)=V((f \times f)^{\ast}I_{p}+R(\alpha))
\]

\noindent We call \textit{double point ideal of the germ $f$} the ideal $I^{2}(f)=\langle (f \times f)^{\ast}I_{p}+R(\alpha)\rangle$.  Therefore 

\begin{center}
 $\mathcal{O}_{D^2(f)} \simeq  \dfrac{\mathcal{O}_{p}}{I^{2}(f)}$,
 \end{center} 

\noindent is the local ring of $D^2(f)$. In order to simplify the notation we will denote the pullback $(f \times f)^{\ast}\mathcal{I}_{p}$ by $I_{\Delta}^2(f)$.

\section{Calculating $d(f)$ using Mond's ideal of $D^2(f)$}

$ \ \ \ \ \ $ As we said in Introduction, Artin-Nagata and Gaffney presented distinct ways of calculating $d(f)$. We show in the following result that $d(f)$ can be calculated simply as the codimension of $I^2(f)$ in $\mathcal{O}_{2n}$.

\begin{proposition}\label{mainresult1} Let $f:(\mathbb{C}^n,0)\rightarrow (\mathbb{C}^{2n},0)$ be an $\mathcal{A}$-finite map germ. Then

\begin{center}
$d(f)= \dfrac{1}{2} \displaystyle \left( dim_{\mathbb{C}}\dfrac{\mathcal{O}_{2n}}{I^2(f)} \right) $.
\end{center}

\end{proposition}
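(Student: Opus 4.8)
The plan is to deform $f$ to a stabilization $F(\mathbf{x},t)=(f_t(\mathbf{x}),t)$ and track the double point space $D^2(f_t)$ as $t$ varies, using flatness/conservation of multiplicity. First I would note that since $f$ is $\mathcal{A}$-finite, $D^2(f)$ is a zero-dimensional complex space germ (its underlying set is $\{0\}\times\{0\}$ by the Mather–Gaffney characterization: outside the origin $f$ is an injective immersion), so $\dim_{\mathbb{C}} \mathcal{O}_{2n}/I^2(f)$ is finite and equals the colength of the double point ideal. I would then assemble the relative version: for the unfolding $F$, build the ideal $I^2(F)\subset\mathcal{O}_{2n+1}$ by the same Mond construction (pulling back the diagonal ideal of $\mathbb{C}^{2n}\times\mathbb{C}^{2n}$ under $F\times_{\mathbb{C}}F$ and adjoining the appropriate minors), and observe that $\mathcal{O}_{D^2(F)}$ is a flat $\mathcal{O}_{\mathbb{C},0}$-module — this is the standard fact that the Mond double-point space commutes with stabilizing unfoldings and is Cohen–Macaulay of the expected dimension when $f$ is $\mathcal{A}$-finite, so $D^2(F)\to\mathbb{C}$ is flat with zero-dimensional fibres.

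Next, by conservation of multiplicity under flat families, $\dim_{\mathbb{C}} \mathcal{O}_{2n}/I^2(f) = \dim_{\mathbb{C}}\mathcal{O}_{D^2(f)}$ equals the total length of the fibre $D^2(f_t)$ for small $t\neq 0$, i.e. the sum of the lengths of $\mathcal{O}_{D^2(f_t),q}$ over all $q$ in the fibre. So it remains to compute this fibre length for a stable $f_t$. Since $f_t$ is stable with $t\neq 0$, its only singularities are ordinary (transverse) double points; for such a point, if $f(\mathbf{x})=f(\mathbf{x}')=p$ with $\mathbf{x}\neq\mathbf{x}'$ and the two branches meeting transversally in $\mathbb{C}^{2n}$, the local structure of $D^2(f_t)$ consists of exactly the two reduced points $(\mathbf{x},\mathbf{x}')$ and $(\mathbf{x}',\mathbf{x})$, each of length $1$, and there is no contribution from the diagonal because a stable immersion-with-double-points has no corank (the singular set of $f_t$ is empty away from the double points, as $n<2n$). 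Hence each transverse double point of $f_t$ contributes $2$ to the total length, giving $\dim_{\mathbb{C}}\mathcal{O}_{2n}/I^2(f) = 2\,d(f)$, which is the claim.

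The main obstacle is justifying the flatness of $\mathcal{O}_{D^2(F)}$ over $\mathcal{O}_{\mathbb{C},0}$, equivalently that the Mond double-point ideal defines a Cohen–Macaulay space of the expected dimension for an $\mathcal{A}$-finite $f:(\mathbb{C}^n,0)\to(\mathbb{C}^{2n},0)$ of arbitrary corank — in the corank-$1$ case this is immediate since $D^2(f)$ is then a complete intersection (as recalled around \eqref{eq7}), but in general one must invoke the results of \cite{mond87} (and the $n\le p$ multiple-point theory) ensuring $D^2(f)$ has dimension $2n-2$ off the origin and behaves well in families; once that is in hand, the dimension count $2n-2 < 2n$ forces the fibre over $0$ (after stabilizing) to be zero-dimensional and the conservation-of-number argument is routine. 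The remaining local computation at a transverse double point — showing each such point contributes length exactly $2$ — is a direct check on the defining equations $X_i-X_i'$ and the maximal minors of $\alpha$, which I would not grind through here.
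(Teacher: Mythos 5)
Your proposal is correct and follows essentially the same route as the paper: take a stabilization $F$, use that $D^2(F)$ is one-dimensional and Cohen--Macaulay (the paper cites \cite[Prop. 1]{victorjuanjo}, viewing $F$ as a germ from $\mathbb{C}^{n+1}$ to $\mathbb{C}^{2n+1}$) so the projection to the parameter axis is flat, and then identify $\dim_{\mathbb{C}}\mathcal{O}_{2n}/I^2(f)$ with the $2d(f)$ reduced points of $D^2(f_t)$ on the nearby fibre --- the paper phrases this last step as degree $=$ Samuel multiplicity $=$ colength of $\langle t\rangle$ (via \cite[Th. D.7]{juanjomond} and \cite[Th. 17.11]{matsumura}), which is the same conservation-of-number principle you invoke. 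Only a minor slip: for $p=2n$ the expected dimension of $D^2(f)$ is $2n-p=0$ (so $D^2(f)$ is empty off the origin, and it is $D^2(F)$ that is one-dimensional), not $2n-2$.
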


\begin{proof} Since $(n,2n)$ is on the set of nice dimensions of Mather (see \cite[p. 208]{mather}), we can consider a stabilization $F:(\mathbb{C}^n \times \mathbb{C},0)\rightarrow (\mathbb{C}^{2n} \times \mathbb{C},0)$, $F=(f_t(\textbf{x}),t)$ of $f$. Note that $F$ is an analytic finite map germ from $(\mathbb{C}^{N},0)$ to $(\mathbb{C}^{2N-1},0)$, where $N=n+1$. Since $F$ is a stabilization of $f$, we have that $D^2(F)$ is one-dimensional. Therefore, by \cite[Prop. 1]{victorjuanjo}, $D^2(F)$ is Cohen-Macaulay. Consider the projection on the parameter space $p:(\mathbb{C}^n \times \mathbb{C}^n \times \mathbb{C},0)\rightarrow (\mathbb{C},0)$, defined by $p(\textbf{x},\textbf{x}^{'},t)=t$. Since $D^2(F)$ is Cohen Macaulay, the restriction $p|_{D^2(F)}:D^2(F)\rightarrow \mathbb{C}$ is flat. Note that 

\begin{equation}\label{eq2}
deg(p|_{D^2(F)})=2d(f). 
\end{equation}

By \cite[Th. D.7]{juanjomond} we have that

\begin{equation}\label{eq3}
deg(p|_{D^2(F)})=e(p^{\ast}m_{\mathcal{O}_1},\mathcal{O}_{D^2(F)}).
\end{equation}

\noindent where $m_{R}$ denotes the maximal ideal of the local ring $R$. 

Now, note that $p^{\ast}m_{\mathcal{O}_1}=\langle t \rangle \mathcal{O}_{D^2(F)}$ is a $m_{\mathcal{O}_{D^2(F)}}$-primary ideal in $\mathcal{O}_{D^2(F)}$. Since $\mathcal{O}_{D^2(F)}$ is one-dimensional, it follows that $\langle t \rangle \mathcal{O}_{D^2(F)}$ is a parameter ideal for $\mathcal{O}_{D^2(F)}$. Thus, by \cite[Th. 17.11]{matsumura},

\begin{center}
$e(p^{\ast}m_{\mathcal{O}_1},\mathcal{O}_{D^2(F)})=dim_{\mathbb{C}} \left( \dfrac{\mathcal{O}_{D^2(F)}}{\langle t \rangle \mathcal{O}_{D^2(F)}}\right) = dim_{\mathbb{C}}(\mathcal{O}_{D^2(f)})$.
\end{center}

\noindent The proof follows now by (\ref{eq2}) and (\ref{eq3}).\end{proof}

\begin{example}\label{example1} Consider the map germ $f(x,y)=(x^2,x^3-xy,y^2,y^3+xy)$. This example appears in \rm\cite[Example 3.4]{gaffney2}. \textit{It is easy to check that $f$ is finitely determined. In this example, we have that the matrix $\alpha$ is}

 \[
  \alpha=  \left[
         \begin{array}{cc}
         
x+x^{'} & 0 \\

x^2+xx^{'}+x^{'2}-y &  -x^{'}  \\

0 & y+y^{'} \\

y & y^2+yy^{'}+y^{'2}+x^{'}

\end{array}
    \right]
\]

\noindent Using {\sc Singular} \rm\cite{singular}, \textit{we find that} $2d(f)=dim_{\mathbb{C}} \ (\mathbb{C}\lbrace x,y,x^{'},y^{'}\rbrace)/I^2(f)=12$, \textit{as shown in} \rm\cite{gaffney2}.

\end{example}

\subsection{Quasihomogeneous map germs from $(\mathbb{C}^n,0)$ to $(\mathbb{C}^{2n},0)$}

$ \ \ \ $ In the section, we would like to study the invariant $d(f)$ for quasihomogeneous map germs. Thus, it is convenient to present the following definition.

\begin{definition}\label{defquasihomog} A polynomial $p(x_1,\cdots,x_n)$ is \textit{quasihomogeneous} if there are positive integers $w_1,\cdots,w_n$, with no common factor and an integer $d$ such that $p(k^{w_1}x_1,\cdots,k^{w_n}x_x)=k^dp(x_1,\cdots,x_n)$. The number $w_i$ is called the weight of the variable $x_i$ and $d$ is called the weighted degree of $p$. In this case, we say $p$ is of type $(d; w_1,\cdots,w_n)$.
\end{definition}

Definition \ref{defquasihomog} extends to polynomial map germs $f:(\mathbb{C}^n,0)\rightarrow (\mathbb{C}^p,0)$ by just requiring each coordinate function $f_i$ to be quasihomogeneous of type $(d_i; w_1,\cdots,w_n)$, for fixed weights $w_1,\cdots,w_n$. In particular, for a quasihomogeneous map germ $f:(\mathbb{C}^n,0)\rightarrow (\mathbb{C}^{2n},0)$ we say that it is quasihomogeneous of type $(d_1,\cdots,d_{2n}; w_1,\cdots,w_n)$. We are now able to state our next result. 

\begin{proposition}\label{quasihomformula} Let $f:(\mathbb{C}^n,0)\rightarrow (\mathbb{C}^{2n},0)$ be an $\mathcal{A}$-finite map germ. If $n\leq 3$ and $f$ is quasihomogeneous of type $(d_1,\cdots,d_{2n}; w_1,\cdots,w_n)$, then $d(f)$ is given by

\begin{equation}\label{eq50}
d(f)=\dfrac{1}{2w_1^2\cdots w_n^2}\left( d_1\cdots d_{2n}+ \displaystyle \sum_{\substack{\alpha+\beta=2n,\\ 0 \leq \alpha_i \leq 1\leq \beta_j \leq n+1}} (-1)^{n+\alpha+1}d_1^{\alpha_1}\cdots d_{2n}^{\alpha_{2n}}\cdot w_1^{\beta_1}\cdots w_n^{\beta_n} \right)
\end{equation}

\noindent where $\alpha=\alpha_1+\cdots + \alpha_{2n}$ and $\beta=\beta_1+\cdots \beta_{n}$.

\end{proposition}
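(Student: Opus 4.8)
The plan is to reduce the computation of $d(f)$ to an intersection-theoretic count by exploiting the structure of $D^2(f)$ together with Proposition \ref{mainresult1}, which already tells us that $2d(f) = \dim_{\mathbb{C}} \mathcal{O}_{2n}/I^2(f) = \dim_{\mathbb{C}} \mathcal{O}_{D^2(f)}$. The subtlety is that for corank $>1$ the double point space $D^2(f) \subset \mathbb{C}^n \times \mathbb{C}^n$ is \emph{not} a complete intersection, so we cannot simply apply B\'ezout to the defining ideal $I_{\Delta}^2(f) + R(\alpha)$ directly as in formula (\ref{eq9}). Instead I would work with the presentation of $D^2(f)$ as a determinantal scheme: $D^2(f)$ is cut out inside the complete intersection $V(I_{\Delta}^2(f))$ (of dimension $n$, defined by the $2n$ equations $(f_i(\mathbf{x}) - f_i(\mathbf{x}'))$) by the vanishing of the maximal minors of the $2n \times n$ matrix $\alpha$. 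For $n \le 3$ this is a codimension-$(n+1)$ determinantal locus of expected dimension $n - 1$, i.e. of the generic codimension $(2n - n + 1)(n - n + 1) = n+1$ predicted by the Thom–Porteous formula, and the $\mathcal{A}$-finiteness of $f$ guarantees this expected dimension is actually attained (so the Thom–Porteous / Giambelli–Thom–Porteous class computes the correct multiplicity).

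The key steps, in order, would be: (1) restrict to a representative and pass to the associated graded / projectivized setting, using quasihomogeneity so that $\mathcal{O}_{D^2(f)}$ is a graded module and $\dim_{\mathbb{C}} \mathcal{O}_{D^2(f)}$ equals the degree of the projective scheme $\mathbb{P}(D^2(f)) \subset \mathbb{P}^{2n-1} \times \mathbb{P}^{2n-1}$ (or an appropriate weighted projective analogue), which is finite when the origin is the only singular/double point; (2) compute this degree via the Thom–Porteous formula for the locus where $\alpha$ drops rank, namely as the degree-$0$ part of $c_{n+1}$ of the appropriate difference of Chern classes, restricted to the complete intersection $V(I_{\Delta}^2(f))$; (3) express all Chern/Segre classes in terms of the weighted degrees $d_i$ and weights $w_j$, since each entry $\alpha_{ij}$ is quasihomogeneous of weighted degree $d_i - w_j$ (the matrix $\alpha$ is "homogeneous" in the sense of having a bigraded structure), so the relevant bundles are direct sums of line bundles whose weights are the $d_i$ and $w_j$; (4) carry out the resulting symmetric-function manipulation — this is where the sum over multi-indices $\alpha, \beta$ with $\alpha + \beta = 2n$, $0 \le \alpha_i \le 1$, $1 \le \beta_j \le n+1$ and the sign $(-1)^{n+\alpha+1}$ emerges, as the expansion of a Schur polynomial / resultant-type determinant in the Chern roots; and finally (5) divide by $2 w_1^2 \cdots w_n^2$ to pass from the scheme-theoretic degree to $d(f)$, the extra $\prod w_j^2$ accounting for the fact that we are counting in $\mathbb{C}^n \times \mathbb{C}^n$ (two copies of the weighted source) rather than in an honest projective space, exactly as in the derivation of (\ref{eq9}) where $w_n^2 \prod w_j$ appears.

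I expect the main obstacle to be step (4), the explicit identification of the Thom–Porteous class with the stated closed-form sum: one must recognize that the Giambelli–Thom–Porteous determinant for the $(2n \times n)$, expected-codimension-$(n+1)$ degeneracy locus, after substituting the line-bundle Chern roots $d_1, \dots, d_{2n}$ (source rows) and $w_1, \dots, w_n$ (the columns), collapses — because each $w_j$ appears with multiplicity forcing $\beta_j \le n+1$ and each $d_i$ is square-free in the expansion forcing $\alpha_i \le 1$ — to precisely the displayed elementary-symmetric-type polynomial $d_1 \cdots d_{2n} + \sum (-1)^{n+\alpha+1} d_1^{\alpha_1} \cdots d_{2n}^{\alpha_{2n}} w_1^{\beta_1} \cdots w_n^{\beta_n}$. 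The restriction $n \le 3$ is used precisely here: for $n \le 3$ the determinantal locus has the expected dimension and the residual/excess-intersection terms in Thom–Porteous vanish, which is exactly why the analogous formula is only conjectured for $n > 3$ (where $D^2(f)$ may fail to be "geometrically" of expected codimension, or the lower-order Segre corrections no longer cancel). As a sanity check I would verify that in the corank-$1$ case, where all but the last row/column structure degenerates, formula (\ref{eq50}) reduces to (\ref{eq9}), and test it against Example \ref{example1} (there $n=2$, type $(2,3,2,3;2,3)$, giving $2d(f) = 12$).
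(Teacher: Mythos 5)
Your overall strategy (reduce via Proposition \ref{mainresult1} to computing $\dim_{\mathbb{C}}\mathcal{O}_{2n}/I^2(f)$ and then exploit the quasihomogeneous determinantal structure of $I^2(f)$, with the entries $\alpha_{ij}$ of weighted degree $d_i-w_j$) starts in the right place, but the central mechanism you propose does not work as stated. First, $V(I^2_{\Delta}(f))$ is \emph{not} a complete intersection: it is cut out by $2n$ equations in $\mathbb{C}^{2n}$ yet has dimension $n$ (it contains the diagonal), so it is a singular, excess-dimensional ambient space on which Chern-class manipulations are not available in the naive way. More seriously, $D^2(f)$ itself is a zero-dimensional germ supported at the origin (its codimension is $2n$, not $n+1$), so there is no projective scheme $\mathbb{P}(D^2(f))$ whose degree equals $\dim_{\mathbb{C}}\mathcal{O}_{D^2(f)}$: the homogeneous ideal contains a power of the maximal ideal, so the associated Proj is empty. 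The Thom--Porteous/Giambelli formula computes the fundamental class of a degeneracy locus of expected codimension in a smooth (compact) ambient variety; it does not compute the colength of a fat point, and $I^2(f)$ is in any case not a pure determinantal ideal but the sum $I^2_{\Delta}(f)+R(\alpha)$, so even a ``local Thom--Porteous'' (Eagon--Northcott/Buchsbaum--Rim) count would not apply off the shelf. Your step (4) is therefore not merely the hard step: it is an unbridged gap, and your sanity checks (corank $1$ reduction, Example \ref{example1}) verify the target formula, not the argument.

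For comparison, the paper's proof supplies exactly the missing mechanism. It builds a generic model: in $\mathcal{O}_{10}$ (for $n=2$) the ideal $I=I_2(A)+\langle a_{i1}x+a_{i2}y\rangle$, checks (by computer) that $\mathcal{O}_{10}/I$ is Cohen--Macaulay of codimension $4$, and pulls back its minimal free resolution along $\varphi(x,y,x',y')=(x-x',y-y',\alpha_{11},\dots,\alpha_{42})$; since the codimension is preserved, the pullback remains a resolution of $\mathcal{O}_4/I^2(f)$ by \cite[Prop. C.12]{juanjomond}. Quasihomogeneity then makes this a graded resolution with explicit twists $k_{j,i}$ expressed in the $d_i-w_j$, and $\dim_{\mathbb{C}}\mathcal{O}_4/I^2(f)$ is obtained as the value at $t=1$ of the Hilbert--Poincar\'e series $(1-h_1+h_2-h_3+h_4)/(1-t^{w_1})^2(1-t^{w_2})^2$; Proposition \ref{mainresult1} converts this into $2d(f)$. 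Note also that the hypothesis $n\leq 3$ is not about excess intersection or failure of expected codimension, as you suggest; it reflects only the computational cost of producing the minimal resolution of the generic model for larger $n$, and indeed the paper conjectures the same formula for all $n$, which your proposed explanation of the restriction would contradict.
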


\begin{proof} The case $n=1$ is well known and the proof is trivial. So, we will prove first the case $n=2$. Let $f:(\mathbb{C}^2,0)\rightarrow (\mathbb{C}^4,0)$ be a quasihomogeneous, $\mathcal{A}$-finite map germ and consider its $4 \times 2$ matrix $\alpha=(\alpha_{ij})$ (described in (\ref{eq15})) and the double point ideal $I^2(f)$. Consider the ring $\mathcal{O}_{10}$ with the following variables 

\begin{center}
$\mathcal{O}_{10}=\mathbb{C}\lbrace x,y,a_{11},a_{12},a_{21},a_{22},a_{31},a_{32},a_{41},a_{42}\rbrace $ 
\end{center}

\noindent and the $4 \times 2$ matrix $A=(a_{ij})$.
  
Define the ideal $I:=I_2(A)+(a_{11}x+a_{12}y, a_{21}x+a_{22}y,a_{31}x+a_{32}y,a_{41}x+a_{42}y)$ of $\mathcal{O}_{10}$, where $I_2(A)$ is the ideal generated by the minors $2 \times 2$ of $A$. Consider the analytic map germ $\varphi:(\mathbb{C}^4,0)\rightarrow (\mathbb{C}^{10},0)$, defined by

\begin{center}
$\varphi(x,y,x',y')=(x-x',y-y',\alpha_{11},\alpha_{12},\alpha_{21},\alpha_{22},\alpha_{31},\alpha_{32},\alpha_{41},\alpha_{42})$.
\end{center}

Note that $\varphi^{\ast}(I)=I^2(f)$, hence $\varphi^{-1}(V(I))=V(I^2(f))$. Using {\sc Singular} \rm\cite{singular}, one can check that $V(I)\subset (\mathbb{C}^{10},0)$ is a Cohen-Macaulay ring and $codim(V(I),\mathbb{C}^{10})=codim(V(I^2(f)),\mathbb{C}^4)=4$. Again using {\sc Singular} one can find the following minimal resolution of $\mathcal{O}_{10}/I$

\begin{equation}\label{eq16}
0 \rightarrow \mathcal{O}_{10}^4 \xrightarrow{M_4} \mathcal{O}_{10}^{15} \xrightarrow{M_3} \mathcal{O}_{10}^{20} \xrightarrow{M_2} \mathcal{O}_{10}^{10} \xrightarrow{M_1} \mathcal{O}_{10} \rightarrow \mathcal{O}_{10}/I \rightarrow 0.
\end{equation}

Hence, by \cite[Prop. C.12]{juanjomond} we have that the pullback of (\ref{eq16}) to $(\mathbb{C}^4,0)$, which is

\begin{equation}\label{eq17}
0 \rightarrow \mathcal{O}_4^4 \xrightarrow{\varphi^{\ast} M_4} \mathcal{O}_4^{15} \xrightarrow{ \varphi^{\ast} M_3} \mathcal{O}_4^{20} \xrightarrow{\varphi^{\ast} M_2} \mathcal{O}_4^{10} \xrightarrow{\varphi^{\ast} M_1} \mathcal{O}_4 \rightarrow \mathcal{O}_4/I^2(f) \rightarrow 0.
\end{equation}

\noindent is a minimal resolution of $\mathcal{O}_{I^2(f)}$ over $\mathcal{O}_4$. Since $f$ is quasihomogeneous, we can compute $dim_{\mathbb{C}}(\mathcal{O}_4/I^2(f))$ by replacing (\ref{eq17}) by the exact sequence of graded modules we obtain by replacing $\mathcal{O}_4$ by its associated graded ring $R=\mathbb{C}[x,y,x^{'},y^{'}]$, with grading $deg(x)=deg(x^{'})=w_1$ and $deg(y)=deg(y^{'})=w_2$. For since $dim_{\mathbb{C}}(\mathcal{O}_4/I^2(f))$ is finite, $I^2(f)$ must contain some power of the maximal ideal, so that $dim_{\mathbb{C}}(\mathcal{O}_4/I^2(f))$ is equal to the dimension of its associated graded module $gr(\mathcal{O}_4/I^2(f))=R/I^2(f)$ (we do not distinguish in our notation the ideal $I^2(f)$ in $\mathcal{O}_4$ and the corresponding ideal in $R$). If $R[k]$ denotes $R$ with its grading shifted by $k$ (so that $1$ has degree $-k$), then associated to (\ref{eq17}) we have the following graded resolution

\begin{center}
$0 \rightarrow \displaystyle \bigoplus_{i=1}^4 R[k_{4,i}] \xrightarrow{\varphi^{\ast} M_4} \displaystyle \bigoplus_{i=1}^{15} R[k_{3,i}] \xrightarrow{\varphi^{\ast} M_3} \displaystyle \bigoplus_{i=1}^{20} R[k_{2,i}] \xrightarrow{\varphi^{\ast} M_2} \displaystyle \bigoplus_{i=1}^{10} R[k_{1,i}] \xrightarrow{\varphi^{\ast} M_1}  \mathcal{O}_4 \rightarrow \mathcal{O}_4/I^2(f) \rightarrow 0.$
\end{center}

\noindent where 

\begin{center}
$k_{1,1}=d_3-w_2+d_4-w_1, \ k_{1,2}=d_2-w_2+d_4-w_1, \ k_{1,3}=d_1-w_2+d_4-w_1, \ k_{1,4}=d_4,$\\ 
$k_{1,5}=d_2-w_2+d_3-w_1, \ k_{1,6}=d_1-w_2+d_3-w_1, \ k_{1,7}=d_3, \ k_{1,8}=d_1-w_2+d_2-w_1, \ k_{1,9}=d_2, \ k_{1,10}=d_1$
\end{center}

\begin{center}
$k_{2,1}=d_2-w_2+d_3-w_2+d_4-w_1, \ k_{2,2}=d_2-w_1+d_3-w_2+d_4-w_1, \ k_{2,3}=d_1-w_2+d_3-w_2+d_4-w_1,$\\ $k_{2,4}=d_1-w_2+d_2-w_2+d_4-w_1, \ k_{2,5}=d_1-w_2+d_2-w_2+d_3-w_1, \ k_{2,6}=d_1-w_1+d_3-w_2+d_4-w_1,$\\ 
$k_{2,7}=d_1-w_1+d_2-w_2+d_4-w_1, \ k_{2,8}=d_1-w_1+d_2-w_2+d_3-w_1, \ k_{2,9}=d_3+d_4-w_1,$\\ 
$k_{2,10}=d_2+d_4-w_1, \ k_{2,11}=d_1+d_4-w_1, \ k_{2,12}=d_2+d_3-w_1, \ k_{2,13}=d_1+d_3-w_1,$\\ 
$k_{2,14}=d_1+d_2-w_1, \ k_{2,15}=d_3+d_4-w_2, \ k_{2,16}=d_2+d_4-w_2, \ k_{2,17}=d_1+d_4-w_2,$\\
$k_{2,18}=d_2+d_3-w_2, \ k_{2,19}=d_1+d_3-w_2, \ k_{2,20}=d_1+d_2-w_2$
\end{center}

\begin{center}
$k_{3,1}=d_1-w_2+d_2-w_2+d_3-w_2+d_4-w_1, \ k_{3,2}=d_1-w_1+d_2-w_2+d_3-w_2+d_4-w_1,$\\ 
$k_{3,3}=d_1-w_1+d_2-w_1+d_3-w_2+d_4-w_1, \ k_{3,4}=d_2-w_1+d_3+d_4-w_1, \ k_{3,5}=d_1-w_1+d_3+d_4-w_1,$\\ 
$k_{3,6}=d_1-w_1+d_2+d_4-w_1, \ k_{3,7}=d_1-w_1+d_2+d_3-w_1, \ k_{3,8}=d_2-w_2+d_3-w_2+d_4,$\\ 
$k_{3,9}=d_2-w_1+d_3-w_2+d_4, \ k_{3,10}=d_1-w_2+d_3-w_2+d_4, \ k_{3,11}=d_1-w_2+d_2-w_2+d_4,$\\ 
$k_{3,12}=d_1-w_2+d_2-w_2+d_3, \ k_{3,13}=d_1-w_1+d_3-w_2+d_4, \ k_{3,14}=d_1-w_1+d_2-w_2+d_4,$\\ 
$k_{3,15}=d_1-w_1+d_2-w_2+d_3$
\end{center}

\begin{center}
$k_{4,1}=d_1-w_1+d_2-w_1+d_3+d_4-w_1, \ k_{4,2}=d_1-w_2+d_2-w_2+d_3-w_2+d_4,$\\
 $k_{4,3}=d_1-w_1+d_2-w_2+d_3-w_2+d_4, \ k_{4,4}=d_1-w_1+d_2-w_1+d_3-w_2+d_4$.
\end{center}

For $j=1,\cdots,4$, consider the polynomial $h_j:=\sum t^{k_{j,i}}$, where for a fixed $j$ the sum runs through all defined values of $k_{j,i}$. The Poincaré series $P_M(t)$ of a graded $R$-module $M=\displaystyle \bigoplus_n M_n$ is defined by $P_M(t)=\displaystyle \sum_n (dim_{\mathbb{C}}M_n)t^n$. If $M=R[-k]$ then one calculates that $P_M(t)=t^k/(1-t^{w_1})^2(1-t^{w_2})^2$. Since the alternating sum of the Poincaré series of the modules in an exact sequence of graded modules with differentials of degree $0$, is equal to $0$, we conclude that the Poincaré series $P_M(t)$ of is equal to $(1-h_1+h_2-h_3+h_4)/(1-t^{w_1})^2(1-t^{w_2})^2$. Finally, $dim_{\mathbb{C}}(\mathcal{O}_4/I^2(f))$ is equal to $P_M(1)$ and with the aid of a computer (it is a long calculation) we can evaluating $P_M(1)$ as the limit as $t\rightarrow 1$ of $P_M(1)$ (since in this case $P_M(t)$ is in fact a polynomial). So we conclude that it is equal to

\begin{center}
$\dfrac{1}{2w_1^2w_2^2}[ d_1d_2d_3d_4-(d_1d_2+d_1d_3+d_1d_4+d_2d_3+d_2d_4+d_3d_4)w_1w_2-(d_1+d_2+d_3+d_4)(w_1^2w_2+w_1w_2^2)]$\\
$ +\dfrac{1}{2w_1w_2}(w_1^2+w_1w_2+w_2^2)$.
\end{center}

\noindent which is exactly the expression in (\ref{eq50}). Now the proof follows by Proposition \ref{mainresult1} which ensure that $dim_{\mathbb{C}}(\mathcal{O}_4/I^2(f))=2d(f)$. We note that the proof for the case where $n=3$ is analogous, the only difference is that the minimal resolution (like in (\ref{eq16})) is of length $6$.\end{proof}\\

The reader may ask if the formula in (\ref{eq50}) is true for the case where $n\geq 4$ or not. When trying to reproduce the proof for $n>3$, we will have to find a minimal resolution of length $2n$, which demands a high computational cost. Thus this technique becomes impractical for high dimensions. Several examples support the idea that in fact this formula is true for all $n\geq 1$ (the case $n=1$ is trivial). This motivates us to state the following conjecture.

\begin{conjecture} Let $f:(\mathbb{C}^n,0)\rightarrow (\mathbb{C}^{2n},0)$ be an $\mathcal{A}$-finite map germ. If $f$ is quasihomogeneous of type $(w_1,\cdots, w_n;d_1,\cdots,d_{2n})$, then $d(f)$ is given by the formula in (\ref{eq50}).
\end{conjecture}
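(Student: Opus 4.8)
The plan is to prove the conjecture by induction on $n$, using the same circle of ideas that works for $n\le 3$ in Proposition~\ref{quasihomformula}, namely a passage to a universal model over a polynomial ring, a Cohen--Macaulay (determinantal) resolution pulled back via a quasihomogeneous map, and an Euler-characteristic-of-Poincar\'e-series computation; the induction is needed only to organize the combinatorics of the graded Betti numbers, which is where the real work lies. First I would fix the universal setup: let $A=(a_{ij})$ be a generic $2n\times n$ matrix of indeterminates, work in the polynomial ring $R_N:=\mathbb{C}[x_1,\dots,x_n,a_{ij}]$ in $N=n+2n^2$ variables, and set $I:=I_n(A)+\langle \sum_j a_{ij}x_j : i=1,\dots,2n\rangle$, the ideal cutting out the incidence variety $\{(\mathbf{x},A): A\mathbf{x}=0,\ \mathrm{rank}\,A<n\}$. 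The key structural fact I would establish is that $\mathcal{O}/I$ is Cohen--Macaulay of the expected codimension $(2n-n+1)+n=n+1+n=2n+1$; this should follow because $I_n(A)$ is a generic determinantal ideal (hence perfect of grade $n+1$ by Eagon--Northcott) and the $2n$ linear forms $\sum_j a_{ij}x_j$ form a regular sequence modulo $I_n(A)$ on a complete intersection inside the determinantal locus — equivalently, the incidence variety is the total space of a vector-bundle-like construction over the Cohen--Macaulay determinantal variety $V(I_n(A))$ and the fibre dimension is constant. Granting this, the Eagon--Northcott / Buchsbaum--Rim complex resolving $I_n(A)$ tensored with the Koszul complex on the linear forms gives an explicit finite free resolution of $\mathcal{O}/I$ over $R_N$, with totally explicit graded shifts read off from the degrees of the $a_{ij}$ (which will be $d_i - w_j$, after we assign weights) and of the $x_j$ (which will be $w_j$).

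Next, exactly as in the proof of Proposition~\ref{quasihomformula}, I would introduce, for a given $\mathcal{A}$-finite quasihomogeneous $f:(\mathbb{C}^n,0)\to(\mathbb{C}^{2n},0)$ of type $(d_1,\dots,d_{2n};w_1,\dots,w_n)$, the map $\varphi:(\mathbb{C}^{2n},0)\to(\mathbb{C}^N,0)$ sending $(\mathbf{x},\mathbf{x}')\mapsto(\mathbf{x}-\mathbf{x}',\alpha_{ij}(\mathbf{x},\mathbf{x}'))$, so that $\varphi^{\ast}I=I^2(f)$ and $\varphi^{-1}V(I)=V(I^2(f))$. Because $D^2(f)$ has codimension $2n$ in $\mathbb{C}^{2n}$ (it is a point for $\mathcal{A}$-finite $f$, and the double-point space of a stable perturbation is Cohen--Macaulay of the right dimension by the same argument citing \cite{victorjuanjo} used in Proposition~\ref{mainresult1}), the pullback of the resolution of $\mathcal{O}/I$ stays exact by \cite[Prop. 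C.12]{juanjomond} and resolves $\mathcal{O}_{2n}/I^2(f)$. Since $\varphi$ is weighted-homogeneous with the stated weights, this pulls back to a graded free resolution over $R=\mathbb{C}[x_1,\dots,x_n,x_1',\dots,x_n']$ with $\deg x_j=\deg x_j'=w_j$, and the graded shifts are the same explicit expressions in the $d_i-w_j$ and $w_j$ coming from the Eagon--Northcott--Koszul resolution. Taking the alternating sum of Poincar\'e series and evaluating at $t=1$ (the series is a polynomial because $\dim_{\mathbb{C}}\mathcal{O}_{2n}/I^2(f)<\infty$) yields $2d(f)=\dim_{\mathbb{C}}\mathcal{O}_{2n}/I^2(f)$ as a universal rational function of $d_1,\dots,d_{2n},w_1,\dots,w_n$ — and, crucially, one that does not depend on $f$ at all, only on the shifts in the universal resolution.

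The main obstacle, and the heart of the argument, is then purely combinatorial: showing that this universal expression equals the right-hand side of \eqref{eq50}. I would not try to evaluate the iterated limit head-on; instead I would identify the answer as a specialization of a Schur-polynomial / Chern-class formula. The Eagon--Northcott resolution of a generic $2n\times n$ determinantal ideal has terms that are (up to twist) exterior and symmetric powers of the two tautological bundles, so the alternating sum of its Poincar\'e series is, after clearing denominators, the ``top Chern class'' $\prod$-type expression $\prod_{i=1}^{2n}\prod_{j=1}^{n}\big((d_i - w_j)\text{-contributions}\big)$ deformed by the linear Koszul factor $\prod_i(d_i)$ versus $\prod_i(d_i-(\text{something}))$; concretely I expect the generating identity to reduce, after dividing by $\prod_j w_j^2$, to an equality of symmetric functions that can be proved by the standard ``partial fractions in $t$ then residue at $t=1$'' technique or, more cleanly, by checking it is multilinear and symmetric of the right degree in the $d_i$ and matching enough coefficients. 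A convenient reformulation: set $e=\prod_j w_j$ and verify
\begin{equation*}
2\,e^{2}\,d(f)=\prod_{i=1}^{2n} d_i + \sum_{\substack{\alpha+\beta=2n\\ 0\le \alpha_i\le 1,\ 1\le\beta_j\le n+1}} (-1)^{n+\alpha+1}\, \mathbf{d}^{\alpha}\,\mathbf{w}^{\beta},
\end{equation*}
by comparing both sides with the leading term of the Hilbert polynomial of $\mathcal{O}/I$ and its lower-order corrections; the exponent bound $\beta_j\le n+1$ should drop out naturally as the rank of the relevant symmetric power, and the sign $(-1)^{n+\alpha+1}$ from the position of the term in the Koszul$\otimes$Eagon--Northcott double complex. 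I expect that once the resolution is written invariantly in terms of the two tautological bundles this identity becomes a one-line consequence of the Cauchy formula, but making that precise — and in particular pinning down the truncation $\beta_j\le n+1$ and the exact signs uniformly in $n$ — is the step I anticipate will require the most care.
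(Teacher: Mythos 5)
You are attempting to prove a statement that the paper itself leaves open: the Conjecture has no proof in the paper, precisely because the authors' method (computing the minimal graded free resolution of the universal model, as in (\ref{eq16})) becomes computationally infeasible for $n>3$. So the question is whether your sketch closes that gap, and it does not: its central structural claim is false. The $2n$ linear forms $\sum_j a_{ij}x_j$ are \emph{not} a regular sequence modulo $I_n(A)$. A dimension count shows this: $V(I_n(A))\subset\mathbb{C}^{n+2n^2}$ has codimension $n+1$, while the incidence variety $V(I)$ (the closure of the kernel-line locus over rank-$(n-1)$ matrices) has dimension $2n^2-n$, i.e.\ codimension $2n$ -- exactly the value $4$ the paper verifies with {\sc Singular} for $n=2$ -- so adding the $2n$ forms drops the dimension by only $n-1$. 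Your own codimension bookkeeping ($2n+1$, obtained by adding $n$, while simultaneously asserting $2n$ forms are regular) is internally inconsistent and contradicts the paper's computation. Consequently the Eagon--Northcott complex tensored with the Koszul complex on those forms is not acyclic and does not resolve $\mathcal{O}/I$; its length would be $3n+1$, whereas the actual minimal resolution has length $2n$ (length $4$ with Betti numbers $1,10,20,15,4$ for $n=2$, length $6$ for $n=3$, as in the paper). Since the graded shifts you feed into the Poincar\'e-series computation are read off from this non-existent resolution, the entire quantitative part of the argument collapses.

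The remaining steps inherit this gap or are conceded by you to be unproved. Cohen--Macaulayness of the universal $\mathcal{O}/I$ for all $n$ is asserted via the false regular-sequence/vector-bundle argument, not established (a Kempf--Lascoux--Weyman-type geometric technique applied to the kernel-bundle desingularization over $\mathbb{P}^{n-1}$ might plausibly yield both CM-ness and the graded Betti numbers, but you do not carry this out); and the final identification of the universal rational expression with the right-hand side of (\ref{eq50}), including the truncation $\beta_j\le n+1$ and the signs, is explicitly left as ``expected.'' The parts that do match the paper -- passing to the universal determinantal model, pulling back a resolution via the quasihomogeneous substitution using the finiteness of $\dim_{\mathbb{C}}\mathcal{O}_{2n}/I^2(f)$ (so that \cite[Prop.\ C.12]{juanjomond} applies), and evaluating the alternating sum of Poincar\'e series at $t=1$, then invoking Proposition \ref{mainresult1} -- are exactly the strategy of Proposition \ref{quasihomformula}; what is missing is the one ingredient the conjecture actually hinges on, namely a closed-form exact graded complex for the universal ideal valid for all $n$, and your candidate for it is not exact.
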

\section{The $\epsilon$-invariant for an isolated non-normal singularity}\label{Sectionmuinvariant}

$ \ \ \ \ $ In this section we will present another way to calculate $d(f)$ (Proposition \ref{mainresult4}) in terms of the $\epsilon$-invariant for isolated non-normal singularities introduced by Greuel in \cite{greuel2}. The main result of this section can be seen as another version of Artin-Nagata formula, with a more geometric formulation and in terms of a more recent invariant. We recall the definition of the $\epsilon$-invariant defined by Greuel in \cite[Section 5]{greuel2}. 

\begin{definition} Let $X$ be an analytic variety in $\mathbb{C}^n$, with $x \in X$. We say that $x$ is an isolated non-normal point of $X$ or $(X,x)$ is an isolated non-normal singularity (INNS for short) if there exists a neighbourhood $U$ of $x$ such that $U \setminus \lbrace x \rbrace$ is a normal analytic variety.
\end{definition}

If $(X,x)$ is an INNS, then $H^0_x(\mathcal{O}_{X})$ is a finite $\mathbb{C}$-vector space, where $H^0_x$ denotes local cohomology (see \cite[p. 32]{greuel2}). Thus, following \cite{greuel2}, we define

\begin{center}
 $\epsilon(X,x)=dim_{\mathbb{C}} \ H^0_x(\mathcal{O}_{X})$,
   \end{center}   

\noindent the $\epsilon$-invariant of $(X,x)$ at the point $x$. Let $f:(\mathbb{C}^n,0)\rightarrow (\mathbb{C}^{2n},0)$ be a finite map germ. In order to simplify the notation, we will denote the pullback $(f \times f)^{\ast}\mathcal{I}_{2n}$ which we used in Section \ref{sec1} by $I_{\Delta}^2(f)$, that is,

\begin{center}
$I_{\Delta}^2(f):=\langle f_1(\textbf{x})-f_1(\textbf{x}^{'}),\cdots,f_{2n}(\textbf{x})-f_{2n}(\textbf{x}^{'}) \rangle$,
\end{center}

\noindent Set $\Delta_f=V(I_{\Delta}^2(f))$. In the following, all primary decompositions considered are minimal (or irredundant).

\begin{lemma}\label{lemma1} Suppose that $f:(\mathbb{C}^n,0)\rightarrow (\mathbb{C}^{2n},0)$ is finitely determined.  If $f$ is stable, then$I_{\Delta}^2(f)=\langle x_1-x_1^{'},\cdots,x_n-x_n^{'}  \rangle$. On the other hand, if $f$ is not stable, then $\Delta_f$ is a isolated non-normal singularity and each primary decomposition of $I_{\Delta}^2(f)$ has the form

\begin{center}
$I_{\Delta}^2(f)=\langle x_1-x_1^{'},\cdots,x_n-x_n^{'}  \rangle \cap Q$,
\end{center}

\noindent where $Q$ is a (not-unique) $m$-primary in $\mathcal{O}_{2n}$. 
\end{lemma}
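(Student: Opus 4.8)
The plan is to reduce the whole statement to one scheme-theoretic observation: writing $\Delta:=\langle x_1-x_1',\dots,x_n-x_n'\rangle\subseteq\mathcal{O}_{2n}$ for the (prime) ideal of the diagonal, one always has $I_{\Delta}^2(f)\subseteq\Delta$ (since $f_i(\textbf{x})-f_i(\textbf{x}')=\sum_j\alpha_{ij}(x_j-x_j')\in\Delta$), and I claim $I_{\Delta}^2(f)=\Delta$ \emph{if and only if} $f$ is an immersion at $0$. For the ``if'' direction: if $\operatorname{rk}df(0)=n$ then some $n\times n$ minor of the matrix $\alpha$ in $(\ref{eq15})$ is a unit in $\mathcal{O}_{2n}$, so the corresponding $n$ equations $f_{i_k}(\textbf{x})-f_{i_k}(\textbf{x}')=\sum_j\alpha_{i_kj}(x_j-x_j')$ can be inverted to write each $x_j-x_j'$ as an element of $I_{\Delta}^2(f)$, giving $\Delta\subseteq I_{\Delta}^2(f)$. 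For the ``only if'' direction: reducing modulo $m_{2n}\Delta$, the images of the generators $f_i(\textbf{x})-f_i(\textbf{x}')$ in the $n$-dimensional space $\Delta/m_{2n}\Delta$ span the row space of $df(0)$, so by Nakayama they generate $\Delta$ exactly when $\operatorname{rk}df(0)=n$. Finally I would invoke that a stable mono-germ $(\mathbb{C}^n,0)\to(\mathbb{C}^{2n},0)$ is automatically an immersion (the Thom--Boardman stratum $\Sigma^1$ of a stable map has codimension $2n-n+1=n+1>n$ in the source, hence is empty), while immersions are always stable; thus $f$ stable $\iff$ $f$ immersion $\iff$ $I_{\Delta}^2(f)=\Delta$. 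This settles the stable case, and shows that when $f$ is not stable we have $I_{\Delta}^2(f)\subsetneq\Delta$.

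Next I would pin down the geometry of $\Delta_f$ when $f$ is not stable. Since $f$ is $\mathcal{A}$-finite, by Mather--Gaffney (as recalled in Section~\ref{sec1}) there is a representative $f\colon U\to V$ with $f^{-1}(0)=\{0\}$ and $f|_{U\setminus\{0\}}$ an injective immersion, and after shrinking $U$ there are no double points in $U$ at all. Then the underlying set of $\Delta_f$ near $(0,0)$ is exactly the diagonal $\Delta_{\mathbb{C}^n}$: a point $(\textbf{x},\textbf{x}')\in\Delta_f$ with $\textbf{x}\neq\textbf{x}'$ would either be a double point (excluded) or force one, hence both, of $\textbf{x},\textbf{x}'$ to be $0$. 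So $\sqrt{I_{\Delta}^2(f)}=\Delta$ by the Nullstellensatz. Moreover, at any point $q=(\textbf{x}_0,\textbf{x}_0)$ with $0\neq\textbf{x}_0\in U$, the germ of $f$ at $\textbf{x}_0$ is an injective immersion, so the dichotomy of the first paragraph applied at $\textbf{x}_0$ gives $(I_{\Delta}^2(f))_q=\Delta_q$; thus $\Delta_f$ coincides with the smooth germ $\Delta_{\mathbb{C}^n}$ off $(0,0)$, and in particular $\Delta_f$ is an INNS.

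It remains to identify the primary decomposition. From the previous paragraph, the only possible associated primes of $\mathcal{O}_{2n}/I_{\Delta}^2(f)$ are $\Delta$ and the maximal ideal $m$: if $P\in\operatorname{Ass}$ with $P\neq m$ then $\dim V(P)\geq1$, so $V(P)\subseteq\Delta_f$ contains a point $q\neq(0,0)$ near $0$, and localizing at $q$ gives $P\mathcal{O}_q\in\operatorname{Ass}(\mathcal{O}_q/(I_{\Delta}^2(f))_q)=\operatorname{Ass}(\mathcal{O}_q/\Delta_q)=\{\Delta_q\}$, whence $P=\Delta$. Since $\Delta=\sqrt{I_{\Delta}^2(f)}$ is the unique minimal prime it is associated, and since $f$ is not stable we have $I_{\Delta}^2(f)\subsetneq\Delta$, so some embedded prime occurs; by the above it must be $m$. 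Hence $\operatorname{Ass}(\mathcal{O}_{2n}/I_{\Delta}^2(f))=\{\Delta,m\}$ and every irredundant primary decomposition reads $I_{\Delta}^2(f)=Q_{\Delta}\cap Q$ with $\sqrt{Q_{\Delta}}=\Delta$ and $\sqrt{Q}=m$. The isolated component $Q_{\Delta}$ is unique and equals the contraction of $(I_{\Delta}^2(f))_{\Delta}$; but $\Delta$ is generated by the regular sequence $x_1-x_1',\dots,x_n-x_n'$, so $(\mathcal{O}_{2n})_{\Delta}$ is regular local of dimension $n$ with maximal ideal $\Delta_{\Delta}$, and $(I_{\Delta}^2(f))_{\Delta}=\Delta_{\Delta}$ (again by the first paragraph, at the generic point of $\Delta_{\mathbb{C}^n}$), so $Q_{\Delta}=\Delta$. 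This gives $I_{\Delta}^2(f)=\Delta\cap Q$ with $Q$ an $m$-primary ideal, which, being the embedded component, is not uniquely determined.

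I expect the only genuinely non-routine ingredient to be the fact that a stable mono-germ from $n$-space to $2n$-space is an immersion; once that is in hand, everything else is a careful but standard exercise in localization and primary decomposition, the one point deserving attention being the verification that the isolated primary component is exactly $\Delta$ and not some strictly larger $\Delta$-primary ideal. The passages to a good representative are precisely the ones already quoted from Mather--Gaffney in Section~\ref{sec1}.
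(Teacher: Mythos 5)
Your argument is correct, but it follows a genuinely different route from the paper's. The paper's proof is top-down: it invokes the Artin--Nagata formula (\ref{eq10}) to get $\dim_{\mathbb{C}}\sqrt{I_{\Delta}^2(f)}/I_{\Delta}^2(f)=2d(f)<\infty$, so this module is supported only at the maximal ideal; that single finiteness statement confines the associated primes to the diagonal ideal $\Delta=\langle x_1-x_1^{'},\dots,x_n-x_n^{'}\rangle$ and $m$, and the dichotomy is then settled by $d(f)=0\iff f$ stable. You avoid Artin--Nagata entirely: you prove directly that $I_{\Delta}^2(f)=\Delta$ iff $f$ is an immersion at $0$ (unit $n\times n$ minor of $\alpha$ plus Nakayama, using $\alpha_{ij}(\textbf{x},\textbf{x})=\partial f_i/\partial x_j(\textbf{x})$), combine this with the standard fact that a stable mono-germ $(\mathbb{C}^n,0)\to(\mathbb{C}^{2n},0)$ is an immersion, and then use the Mather--Gaffney representative to see that away from the origin the ideal sheaf generated by the $f_i(\textbf{x})-f_i(\textbf{x}^{'})$ is exactly the reduced diagonal; from this you deduce $\sqrt{I_{\Delta}^2(f)}=\Delta$, that $\Delta_f$ is an INNS, that the only possible embedded prime is $m$, and that the isolated component is $\Delta$ itself. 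Your route is self-contained (the lemma no longer rests on (\ref{eq10})) and it supplies proofs of the points the paper only asserts, namely $\sqrt{I_{\Delta}^2(f)}=\Delta$, normality of $\Delta_f\setminus\{0\}$, and the identification of the isolated component with $\Delta$ rather than merely some $\Delta$-primary ideal; the paper's route buys brevity, since the finite-length statement does all of this in one line. The one step you should tighten is the transfer of an associated prime $P$ of $\mathcal{O}_{2n}/I_{\Delta}^2(f)$ to a nearby stalk $\mathcal{O}_q$: $\mathcal{O}_q$ is not a localization of the germ ring $\mathcal{O}_{2n}$, so ``localizing at $q$'' needs a coherence argument --- for instance write $P=(I_{\Delta}^2(f):h)$, use that the colon sheaf is coherent with stalk at $q\neq 0$ equal to either the whole ring or the diagonal ideal at $q$, and compare dimensions of the zero sets --- or, even more simply, note that your stalkwise computation plus coherence (Rückert's Nullstellensatz for the coherent quotient sheaf) already yields $\dim_{\mathbb{C}}\Delta/I_{\Delta}^2(f)<\infty$, after which your conclusion, and indeed the paper's original argument, follows by pure commutative algebra.
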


\begin{proof} Note that $\sqrt{I^2_{\Delta}(f)}=\langle x_1-x_1^{'},\cdots,x_n-x_n^{'}  \rangle$. Let 

\begin{equation}\label{eq12}
I_{\Delta}^2(f)=Q_1 \cap \cdots Q_r,
\end{equation}

\noindent be a (minimal) primary decomposition of $I_{\Delta}^2(f)$, and set $P_i=\sqrt{Q_i}$. By the formula (\ref{eq10}) of Arting Nagata, we have that if $f$ is finitely determined, then the support of the $\mathcal{O}_{2n}$-module $\sqrt{I^2_{\Delta}(f)}/I^2_{\Delta}(f)$ is the maximal ideal of $\mathcal{O}_{2n}$. Since $\sqrt{I^2_{\Delta}(f)}$ is a prime ideal, this implies that $\langle x_1-x_1^{'},\cdots,x_n-x_n^{'}  \rangle$ necessarily appers in (\ref{eq12}) and the only prime ideal distinct to $\langle x_1-x_1^{'},\cdots,x_n-x_n^{'}  \rangle$  which eventually appears as the radical ideal of some of the ideals in a primary decomposition of $I_{\Delta}^2(f)$ is the maximal ideal of $\mathcal{O}_{2n}$. In other words, we have that $r=1$ or $2$, where $Q_1=\langle x_1-x_1^{'},\cdots,x_n-x_n^{'}  \rangle$ and eventually appears some $m$-primary in $\mathcal{O}_{2n}$ ideal $Q_2=Q$. 
Now, we have that $d(f)=0$ if and only if $f$ is stable. This implies that $Q$ appears in (\ref{eq12}) if and only if $f$ is not stable. Now it is clear that $\Delta^2(f) \setminus \lbrace 0 \rbrace$ is normal.\end{proof}\\

The following theorem is essentially due to Artin-Nagata \cite{artinnagata}. We just present here another point of view of Artin Nagata formula (\ref{eq10}) using the $\epsilon$ invariant of the INNS $\Delta_f$.

\begin{proposition}\label{mainresult4} If $f:(\mathbb{C}^n,0)\rightarrow (\mathbb{C}^{2n},0)$ is finitely determined, then $\epsilon(\Delta_f)=2d(f)$. Furthermore, if $I_{\Delta}^2(f)=\langle x_1-x_1^{'},\cdots,x_n-x_n^{'}  \rangle \cap Q$ is a primary decomposition of $I_{\Delta}^2(f)$, then

\begin{equation}\label{eq14}
d(f)=\dfrac{1}{2} \displaystyle \left(  dim_{\mathbb{C}}\dfrac{\mathcal{O}_{2n}}{Q} \ - \ dim_{\mathbb{C}}\dfrac{\mathcal{O}_{2n}}{\langle x_1-x_1^{'},\cdots,x_n-x_n^{'}  \rangle + Q}    \right).
\end{equation}

\end{proposition}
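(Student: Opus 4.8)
The plan is to identify $\epsilon(\Delta_f) = \dim_{\mathbb{C}} H^0_0(\mathcal{O}_{\Delta_f})$ with the dimension of the "extra" piece of the double point ideal and then invoke the Artin--Nagata formula (\ref{eq10}) together with Lemma \ref{lemma1}. First I would note that by Lemma \ref{lemma1} the case where $f$ is stable is trivial: there $I_{\Delta}^2(f) = \langle x_1 - x_1', \ldots, x_n - x_n'\rangle$ is prime, so $\mathcal{O}_{\Delta_f}$ is a regular local ring, hence normal, hence $H^0_0(\mathcal{O}_{\Delta_f}) = 0 = 2d(f)$. So assume $f$ is not stable. Then by Lemma \ref{lemma1} we have a primary decomposition $I_{\Delta}^2(f) = P \cap Q$ with $P = \langle x_1 - x_1', \ldots, x_n - x_n'\rangle$ and $Q$ an $m$-primary ideal of $\mathcal{O}_{2n}$, and $\Delta_f$ is an INNS.

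The key algebraic step is to compute $H^0_0(\mathcal{O}_{\Delta_f})$. Since $\Delta_f$ has an isolated non-normal point at $0$ and $P$ is the unique minimal prime, $H^0_0(\mathcal{O}_{\Delta_f})$ is precisely the submodule of $\mathcal{O}_{\Delta_f} = \mathcal{O}_{2n}/(P \cap Q)$ supported at $0$; equivalently, it is the kernel of the localization map, or what amounts to the same thing, the torsion supported at the maximal ideal. I would argue that this submodule is $(P/(P\cap Q))$, i.e. the image of $P$ in $\mathcal{O}_{2n}/(P\cap Q)$: indeed elements of $P$ become nilpotent-in-the-support-sense since away from $0$ the decomposition is just $I_{\Delta}^2(f) = P$ (as $Q$ is $m$-primary), while conversely anything supported at $0$ must vanish on the component $V(P)$ away from $0$ and hence lie in $P$ by the primary decomposition. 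Therefore
\[
H^0_0(\mathcal{O}_{\Delta_f}) \cong \frac{P}{P \cap Q} \cong \frac{P + Q}{Q},
\]
using the second isomorphism theorem. Consequently
\[
\epsilon(\Delta_f) = \dim_{\mathbb{C}} \frac{P+Q}{Q} = \dim_{\mathbb{C}} \frac{\mathcal{O}_{2n}}{Q} - \dim_{\mathbb{C}} \frac{\mathcal{O}_{2n}}{P+Q},
\]
both terms on the right being finite because $Q$ is $m$-primary. This already gives formula (\ref{eq14}) once we know $\epsilon(\Delta_f) = 2d(f)$.

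For the equality $\epsilon(\Delta_f) = 2d(f)$, I would compare with the Artin--Nagata double point formula (\ref{eq10}), which says $2d(f) = \dim_{\mathbb{C}} \mathcal{O}_{2n}/(P + I_{\Delta}^2(f))$... more precisely $2d(f) = \dim_{\mathbb{C}}\, P/(P \cdot (\text{something}))$; the cleanest route is to use that $2d(f) = \dim_{\mathbb{C}} P/(I_{\Delta}^2(f)\cdot \mathcal{O}_{2n} \text{ restricted appropriately})$. Concretely, (\ref{eq10}) states $2d(f) = \dim_{\mathbb{C}} \mathcal{O}_{2n}/\langle f_i(\mathbf{x}) - f_i(\mathbf{x}')\rangle$ localized at... — I would reconcile the two presentations by observing that $\mathcal{O}_{2n}/I_{\Delta}^2(f)$ has length $\dim_{\mathbb{C}} P/(P\cap Q) + \dim_{\mathbb{C}} \mathcal{O}_{2n}/P$; but $\mathcal{O}_{2n}/P$ is infinite-dimensional, so the right statement of (\ref{eq10}) is really the finite quantity $\dim_{\mathbb{C}} P/(P \cap Q)$, i.e. the length of the torsion submodule of $\mathcal{O}_{2n}/I_{\Delta}^2(f)$, which equals $\dim_{\mathbb{C}} \sqrt{I_{\Delta}^2(f)}/I_{\Delta}^2(f)$ — and this is exactly $H^0_0(\mathcal{O}_{\Delta_f})$. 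So the core of the argument is the identification, valid for this particular primary structure, of the Artin--Nagata number $2d(f)$ with the length of $\sqrt{I_{\Delta}^2(f)}/I_{\Delta}^2(f)$, which is $\epsilon(\Delta_f)$.

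The main obstacle I anticipate is pinning down the exact algebraic meaning of the Artin--Nagata formula (\ref{eq10}) and matching its normalization to $\dim_{\mathbb{C}} P/(P\cap Q)$: the displayed version in the introduction writes a quotient whose denominator involves $\langle f_i(\mathbf{x}) - f_i(\mathbf{x}')\rangle$ and numerator $\langle x_i - x_i'\rangle$, so I must be careful that this is the length of $P/I_{\Delta}^2(f)$ (a torsion module over $\mathcal{O}_{2n}/I_{\Delta}^2(f)$) and not something else, and that this torsion module is annihilated by a power of $m$ precisely because of Lemma \ref{lemma1}. Once that bookkeeping is done, the chain $2d(f) = \dim_{\mathbb{C}} P/(P\cap Q) = \dim_{\mathbb{C}}(P+Q)/Q = \epsilon(\Delta_f)$ closes the argument and (\ref{eq14}) follows immediately by writing $\dim_{\mathbb{C}}(P+Q)/Q = \dim_{\mathbb{C}}\mathcal{O}_{2n}/Q - \dim_{\mathbb{C}}\mathcal{O}_{2n}/(P+Q)$.
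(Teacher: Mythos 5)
Your proposal is correct and follows essentially the same route as the paper: Lemma \ref{lemma1} supplies the decomposition $I_{\Delta}^2(f)=P\cap Q$ with $P=\langle x_1-x_1^{'},\cdots,x_n-x_n^{'}\rangle$, the invariant $\epsilon(\Delta_f)$ is identified with $\dim_{\mathbb{C}}\sqrt{I_{\Delta}^2(f)}/I_{\Delta}^2(f)$, this equals $2d(f)$ by the Artin--Nagata formula (\ref{eq10}) (whose displayed form is literally $\dim_{\mathbb{C}} P/I_{\Delta}^2(f)$, so the reconciliation you worry about at the end is immediate), and (\ref{eq14}) then falls out of the decomposition. The only real difference is that you verify by hand what the paper cites from \cite{greuel2}: your direct computation $H^0_0(\mathcal{O}_{\Delta_f})=P/(P\cap Q)\cong (P+Q)/Q$ together with the second isomorphism theorem replaces the references to Greuel's identification $H^0_x(\mathcal{O}_X)=\sqrt{I}/I$ and his Corollary 5.7, and both steps of yours are sound.
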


\begin{proof} Since $\Delta^2(f)$ is an isolated non-normal singularity we can calculate its $\epsilon$-invariant. We have that

\begin{equation}\label{eq13}
\epsilon(\Delta^2(f))=dim_{\mathbb{C}}\dfrac{\sqrt{I^2_{\Delta}(f)}}{I^2_{\Delta}(f)}=2d(f),
\end{equation}

\noindent where the first equality in (\ref{eq13}) follows by the fact that if $X=V(I)$, then $H^0_x(\mathcal{O}_{X})=\sqrt{I}/I$ (see \cite[p. 32]{greuel2}) and the second equality in (\ref{eq13}) follows by the formula (\ref{eq10}) of Arting Nagata. Now the formula in (\ref{eq14}) follows by Lemma \ref{lemma1} and \cite[Corollary 5.7]{greuel2}.\end{proof}

\begin{example} Consider the map germ $f(x,y)=(x^2,x^3-xy,y^2,y^3+xy)$ of Example \ref{example1}(b). Using {\sc Singular} we found that $I^2_{\Delta}(f)= \langle x-x^{'},y-y^{'} \rangle \cap Q$, where

\begin{center}
$Q=\langle x^{'4}, \ yx^{'}y^{'3}, \ yx^{'2}y^{'2}-x^{'2}y^{'3}, \ 2yx^{'3}-2x^{'3}y^{'}+yy^{'3}, \ y^2-y^{'2}, \ -2yx^{'}y^{'}+2xy^{'2}-yy^{'3}, \ -2yx^{'2}+2xx^{'}y^{'}+yx^{'}y^{'2}-x^{'}y^{'3}, \ 2xx^{'2}-2x^{'3}+yy^{'2}-y^{'3},  \ 2xy-2x^{'}y^{'}+yy^{'2}-y^{'3}, \ x^2-x^{'2} \rangle$
\end{center}


We have that $2d(f)=\dfrac{1}{2} \displaystyle \left(  dim_{\mathbb{C}}\dfrac{\mathbb{C}\lbrace x,y,x^{'},y^{'}\rbrace}{Q} \ - \ dim_{\mathbb{C}}\dfrac{\mathbb{C}\lbrace x,y,x^{'},y^{'}\rbrace}{\langle x-x^{'},y-y^{'}  \rangle + Q}    \right)=28-16=12$.

\end{example}

\section{Whitney equisingularity}

$ \ \ \ \ $ We are also interested in studying invariants that control the Whitney equisingularity of a deformation of $f$. We consider an unfolding $F:(\mathbb{C}^n \times \mathbb{C},0)\rightarrow (\mathbb{C}^{2n}\times \mathbb{C},0)$, $F(\textbf{x},t)=(f_t(\textbf{x}),t)$, and we assume that it is origin preserving, which means that $f_t(0)=0$ for any $t$. Gaffney in \cite[Sec. 7]{gaffney} showed that the Whitney equisingularity, hence the topological triviality, of a 1-parameter family of map germs from $(\mathbb{C}^n,0)$ to $(\mathbb{C}^p,0)$ is controlled by the multiplicities of the local polar varieties and the zero stable invariants of all the stable types which appear in the source and in the target. In order to use Gaffney's result, lets recall the definition of good and excellent unfoldings (see \cite[Def. 2.1, 6.2]{gaffney}).

\begin{definition}\label{defgood}
(a) Consider a finitely determined map germ $f: (\mathbb{C}^n,0)\rightarrow (\mathbb{C}^{2n},0)$ and let $F:(\mathbb{C}^n \times \mathbb{C},0)\rightarrow (\mathbb{C}^{2n}\times \mathbb{C},0)$, $F=(f_t(\textbf{x}),t)$ be an unfolding of $f=f_0$. We say that $F$ is a good unfolding of $f$ if:\\

(a.1) there is a representative $F : F^{-1}(V \times T) \rightarrow V \times T$ such that $f_t^{-1}(0)=\lbrace 0 \rbrace$ and

(a.2) $f_t: U_t \setminus \lbrace 0 \rbrace \rightarrow \mathbb{C}^{2n}$ is stable for any $t \in T$, where $U_t=F^{-1}(V \times \lbrace t \rbrace)$.\\

\noindent (b) We say that $F$ is excellent if it is good and $d(f_t)$ is constant.

\end{definition}

The following proposition shows how we can control the condition that $F$ is excellent. The result essentially follows by \cite[Corollary 3.5 and 3.3]{gaffney2}, we will present a proof here for completeness.

\begin{proposition}\label{propexcellent} Let $f$ and $F$ as in Definition \ref{defgood}. Then $F$ is excellent if and only if $d(f_t)$ is constant.
\end{proposition}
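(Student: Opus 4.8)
Recall from Definition~\ref{defgood}(b) that ``$F$ is excellent'' means ``$F$ is good and $t\mapsto d(f_t)$ is constant''; the implication $(\Rightarrow)$ is therefore immediate, and the whole content is the converse. So I assume $d(f_t)$ is constant. Note first that in order for $d(f_t)$ to be defined and finite each $f_t$ (with $t$ small) must be $\mathcal{A}$-finite, hence by the Mather--Gaffney criterion it has a representative with $f_t^{-1}(0)=\{0\}$ which is injective and immersive off the origin; what must still be produced is a \emph{single} representative of $F$ realising this simultaneously for all small $t$. In other words, the task is to show that $F$ is good, and then $F$ is excellent by definition.

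\textbf{Step 1: pass to $D^2(F)$.} I would view $F$, exactly as in the proof of Proposition~\ref{mainresult1}, as a finite map germ $(\mathbb{C}^{n+1},0)\to(\mathbb{C}^{2n+1},0)$, and consider its double point space $D^2(F)\subset\mathbb{C}^n\times\mathbb{C}^n\times\mathbb{C}$ together with the projection $p(\mathbf{x},\mathbf{x}',t)=t$, whose fibre over $t$ is a representative of $D^2(f_t)$. If $d(f)=0$ then $f$ is stable, every nearby $f_t$ is stable, $F$ is trivially good and there is nothing to prove; so assume $d(f)>0$. Then $d(f_t)=d(f)>0$ for all small $t$, so by Proposition~\ref{mainresult1} $\dim_{\mathbb{C}}\mathcal{O}_{D^2(f_t)}=2d(f_t)>0$, whence $(0,0)\in D^2(f_t)$ for every small $t$; thus $D^2(F)$ contains the $t$-axis, and since every fibre of $p$ is zero dimensional (each $f_t$ being $\mathcal{A}$-finite), $D^2(F)$ is one dimensional. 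By \cite[Prop.~1]{victorjuanjo} it is Cohen--Macaulay, so $p|_{D^2(F)}$ is a finite flat map germ, and consequently all its fibres have the same length, namely $\dim_{\mathbb{C}}\mathcal{O}_{D^2(f)}=2d(f)$ (the fibre over $t=0$, by Proposition~\ref{mainresult1}).

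\textbf{Step 2: constancy forces goodness.} Fix a representative of $D^2(F)$ over $B_\varepsilon\times B_\varepsilon\times T$, with $\varepsilon$ small enough that $D^2(f)$ has no point other than $(0,0)$ in $B_\varepsilon\times B_\varepsilon$. Then for small $t$ the zero dimensional scheme $D^2(f_t)\cap(B_\varepsilon\times B_\varepsilon)$ splits as the disjoint union of its component at $(0,0)$, of length $2d(f_t)$ by Proposition~\ref{mainresult1}, and of the points lying off $(0,0)$, of total length $c_t\ge 0$; hence $2d(f)=2d(f_t)+c_t$. Here $c_t$ counts precisely the self-intersections, singular points and extra preimages of $0$ of the representative $f_t$ occurring in $B_\varepsilon\setminus\{0\}$, so the hypothesis $d(f_t)=d(f)$ gives $c_t=0$ for all small $t$, i.e.\ $f_t^{-1}(0)\cap B_\varepsilon=\{0\}$ and $f_t$ is an injective immersion on $B_\varepsilon\setminus\{0\}$. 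These are conditions (a.1) and (a.2) of Definition~\ref{defgood}, so $F$ is good, hence excellent.

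\textbf{Main obstacle.} The delicate point is Step~2: turning the numerical identity $d(f_t)=d(f)$ into the geometric statement that no double point escapes the chosen neighbourhood and none of the instability is lost from the origin as $t\to 0$. This needs the Cohen--Macaulayness input \cite[Prop.~1]{victorjuanjo} (itself a consequence of the determinantal structure of Mond's ideal) together with some care in matching the germ of $D^2(F)$ at the origin with an honest representative $F:F^{-1}(V\times T)\to V\times T$; it is exactly the content of Gaffney's conservation-of-multiplicity and polar-variety arguments in \cite[Cor.~3.3 and 3.5]{gaffney2}, which one may alternatively invoke directly.
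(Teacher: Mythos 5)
Your argument is correct, but it takes a genuinely different route from the paper's. The paper notes that the statement ``essentially follows'' from \cite[Corollaries 3.3 and 3.5]{gaffney2} and proves goodness by contradiction: if condition (a.1) of Definition \ref{defgood} fails then $f_t^{-1}(0)$ contains a point besides the origin and $F$ is not injective, while if (a.2) fails an arc of unstable points forces $F$ not to be an immersion along it; both contradict Gaffney's Corollary 3.5, which, under the constancy of $d(f_t)$, supplies the needed injectivity and immersivity of $F$ off the origin. You instead make the underlying conservation-of-number argument explicit inside the paper's own framework: you identify the fibres of $p|_{D^2(F)}$ with the double point schemes $D^2(f_t)$, use one-dimensionality plus Cohen--Macaulayness of $D^2(F)$ (\cite[Prop. 1]{victorjuanjo}) to get flatness of $p|_{D^2(F)}$, and then apply Proposition \ref{mainresult1} twice (to $f$ and to the germ of $f_t$ at $0$) to split the constant total fibre length $2d(f)$ as $2d(f_t)+c_t$, so constancy of $d(f_t)$ forces $c_t=0$, i.e.\ no double points, singular points or extra preimages of $0$ for $f_t$ away from the origin --- exactly (a.1) and (a.2). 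What your route buys is a self-contained proof recycling the machinery already set up for Proposition \ref{mainresult1} instead of outsourcing the key step to \cite{gaffney2}; what it costs is that you must justify the standard compatibilities you rightly flag: that $f_t$ is $\mathcal{A}$-finite for small $t$ (so $d(f_t)$ and Proposition \ref{mainresult1} make sense), that Mond's ideal is compatible with specialization $t=t_0$ at every point of the fibre (the paper only uses this at $t=0$ at the origin), and that a representative can be chosen so that no length escapes through the boundary --- points which the appeal to Gaffney absorbs. Your dismissal of the case $d(f)=0$ is acceptable (a stable germ has trivial unfoldings), though the same fibre-length argument handles it without a case split, since an empty special fibre forces $D^2(F)$ to be empty near the origin.
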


\begin{proof} We just need to show that if $d(f_t)$ is constant then $F$ is good. Suppose that condition (a.1) in Definition \ref{defgood} fails. That is, suppose that for every representative of $F$ (which we also call $F$), we have that $f_t^{-1}(0) \neq 0$. Since we assume that $F$ is origin preserving, we have that $f_t(0)=0$ for all $t$. Thus $\lbrace 0 \rbrace \subset f_t^{-1}(0)$, and we conclude that $f_t^{-1}(0)$ contains at least two points. Therefore $F$ is not injective, which is a contradiction by \cite[Corollary 3.5]{gaffney2}.

On the other hand, suppose that condition (a.2) in Definition \ref{defgood} fails. Then for every representative of $F$, there is an arc $(\alpha(t),t) \subset (\mathbb{C}^{2n} \times T) \setminus \lbrace 0 \rbrace \times T$ which contains the origin in its closure such that ${f_t}:(\mathbb{C}^n,S)\rightarrow (\mathbb{C}^{2n},0)$ is not stable, where $S:=f_t^{-1}(\alpha(t))$. In particular, $F$ is not an immersion in $F^{-1}(\alpha)$, which is a contradiction, again by \cite[Corollary 3.5]{gaffney2}.\end{proof}\\

We will briefly recall from \cite[Sections 6 and 7]{gaffney} the notion of Whitney equisingular unfolding $F:(\mathbb{C}^n \times \mathbb{C},0)\rightarrow (\mathbb{C}^p \times \mathbb{C},0)$ and the theorem which characterizes it in terms of the polar multiplicities of the strata defined by stable types. 

\begin{definition} Let $f:(\mathbb{C}^n,0)\rightarrow (\mathbb{C}^p,0)$ be a finitely determined map germ, with $n\leq p$.
If $F=f_t$ is a one-parameter unfolding of $f$ with parameter axis $T$, then a regular stratification $(\mathcal{X},\mathcal{X}^{'})$ of $F$ is said to be \textit{Whitney equisingular} along $T$ if $T$ is a stratum of $\mathcal{X}$ and of $\mathcal{X}^{'}$, ($\mathcal{X}$ and $\mathcal{X}^{'}$ are Whitney regular along $T$) and if any stratum $Y \in \mathcal{X}$ (respectively $\mathcal{X}^{'}$) satisfies Thom's condition over any other stratum $W \in \mathcal{X}$ (respectively $\mathcal{X}^{'}$).
\end{definition}

The polar multiplicities of the polar varieties (defined by Teissier in \cite[Ch. IV]{teissier}) of the stable types are the invariants needed to show the Whitney equisingularity of unfoldings. 

Suppose $f:(X,0)\rightarrow (S,0)$ is a flat map with smooth fibers at every point $X \setminus Sing(X)$, where $X$ is an analytic variety. Let $p:\mathbb{C}^n \rightarrow \mathbb{C}^{d-k+1}$ be a linear projection such that $ker(p)=D_{d-k+1}$, where $D_{d-k+1}$ is a linear subspace of $(\mathbb{C}^n,0)$ of dimension $k$. For $x \in X \setminus Sing(X)$, the fiber $X(f(x))$ is non-singular at $x$ contained in $\lbrace f(x) \rbrace \times \mathbb{C}^n$ and one denotes by $\pi_{x}:X(f(x))\rightarrow \mathbb{C}^{d-k+1}$ the restriction of $p$ to $X(f(x))$. Let $P_k(f,p)$ be the closure of points $x \in X \setminus Sing(X)$ such that $x \in \Sigma(\pi_x)$, one calls the closed analytic subspace $P_k(f,p)$ of $X$, the relative polar variety of codimension $k$ associated to $D_{d-k+1}$. If $f$ is the constant map, we denote this by $P_k(X)$, called absolute polar variety.

The key invariant of $P_k(f,p)$ is its polar multiplicity which we denote by $m_k(X,f)$, if $f$ is the constant map, we denote this by $m_0(P_k(X))$ or $m_k(X)$. We apply now these polar varieties and multiplicities to the strata of the stratification defined by stable types in a finitely determined map germ.

Let $f:(\mathbb{C}^n,0)\rightarrow (\mathbb{C}^p,0)$ be a finitely determined map germ of discrete stable type and let $X$ be one of the strata induced by stable types either in the source $\mathbb{C}^n$ or the target $\mathbb{C}^p$. The closure $\overline{X}$ has an analytic structure in such a way that it is reduced and o pure dimension except perhaps at $0$, and it is well behaved under deformation. Thus, if $X$ has dimension $d\geq 1$, we can consider the \textit{polar multiplicities} $m_k(\overline{X})$, for $k=0,\cdots,d-1$, where $d=dim(\overline{X})$.

Moreover, Gaffney in \cite[p. 195]{gaffney} defines a new invariant as following: take a stabilization $F:(\mathbb{C}^n \times \mathbb{C},0)\rightarrow (\mathbb{C}^p \times \mathbb{C},0)$ of $f$, and denote by $Y$ the corresponding stratum in $F$ defined by the same stable type. The restriction of the projection onto the first factor $\pi:(\overline{Y},0)\rightarrow (\mathbb{C},0)$ gives a deformation of $\overline{X}$. Then, define the \textit{d}th \textit{stable multiplicity} as $m_d(\overline{X})=m_d(\overline{Y},\pi)$. If follows that $m_d(\overline{X})$ is also an invariant of $f$, which does not depend on the stabilization $F$.

We are able now to state the main theorem of \cite[p. 206-207]{gaffney}, which characterizes Whitney equisingularity in terms of the constancy of all polar multiplicities (including the \textit{d}th stable multiplicity) of all the strata defined by stable types. For the definitions of good and excellent unfolding in the general case see \cite[Def. 2.1 and 6.2]{gaffney}.

\begin{theorem}\label{gaffneytheorem}(\rm\cite{gaffney})\textit{Suppose that $F:(\mathbb{C}^n \times \mathbb{C},0)\rightarrow (\mathbb{C}^p \times \mathbb{C},0)$ is a good unfolding of a finitely determined map germ $f$ with $(n,p)$ in the range of nice dimensions of Mather. Then $F$ is Whitney equisingular if and only if it is excellent and the polar and stable multiplicities of all the strata of the stratification gives by stable types are constant.}
\end{theorem}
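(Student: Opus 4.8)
The statement is precisely Gaffney's theorem from \cite[Sec.~6 and 7]{gaffney}, so the plan is to recall the architecture of that argument rather than to reprove it from scratch. The backbone is the translation, due to L\^e and Teissier, of the Whitney and Thom conditions into statements about integral closure of modules: a regular stratification is Whitney regular along $T$ precisely when certain Jacobian-type modules attached to the strata behave equisingularly along $T$, and Thom's condition admits an analogous reformulation. The invariants that detect this behaviour are exactly the polar multiplicities, which is why they are the natural objects to track.

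First I would set up the stratification by stable types. Since $f$ has discrete stable type and $(n,p)$ lies in the nice dimensions, $f$ (and a stabilization $F$) induces finite Whitney stratifications in both source and target whose strata are the loci of the various stable types; one checks that the closures $\overline{X}$ carry a reduced analytic structure, pure-dimensional away from $0$ and well behaved under deformation, so that the polar multiplicities $m_k(\overline{X})$ and the stable multiplicity $m_d(\overline{X})$ are well-defined invariants of $f$. Next, the necessity direction: if $F$ is Whitney equisingular along $T$, then each stratum closure is equimultiple along $T$ in the sense of its polar multiplicities. This follows from Teissier's principle of specialization of integral dependence together with the compatibility of the polar varieties with the stratification; excellence (constancy of $d(f_t)$) is forced because the zero-dimensional strata in the target are counted by $d$, and by Proposition \ref{propexcellent} this is the only obstruction to goodness.

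For the converse, assuming excellence and constancy of all polar and stable multiplicities of all strata, one shows the Whitney and Thom conditions hold pair by pair. The engine is the multiplicity--polar formula, which expresses the polar multiplicities of the total space $\overline{Y}$ of a stratum in the family in terms of those of the special fibre $\overline{X}$ plus a correction term that vanishes exactly when the relevant module is integrally closed along $T$; running this over every incident pair of strata, and using excellence to dispose of the zero-dimensional types and of the injectivity and immersion behaviour off the diagonal, one recovers both the pair Whitney conditions and Thom's condition, hence the Whitney equisingular stratification.

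The hard part will be this last step: controlling the Whitney conditions simultaneously for \emph{every} pair of incident strata, which is exactly where Gaffney's machinery of landing points, the multiplicity--polar theorem, and the identification of which invariant governs which stratum are needed. In the present situation $p=2n$ this bookkeeping simplifies substantially, and the subsequent sections of this paper are devoted to cutting the list of invariants one must actually verify down to a manageable set.
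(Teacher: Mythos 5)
This statement is Gaffney's theorem and the paper does not prove it: it is quoted directly from \cite{gaffney} as background, so there is no internal proof to compare your text against, and citing the original is the standard (and intended) treatment here. Your ``plan'' is consistent with the broad architecture of Gaffney's argument --- stratification by stable types with well-behaved closures, Teissier's translation of Whitney/Thom regularity into integral-closure and polar-multiplicity data \cite{teissier}, \cite{leteissier}, excellence handling the zero-dimensional stable types and goodness --- but as written it is an outline, not a proof: the step you yourself flag as ``the hard part'' (verifying the Whitney and Thom conditions for every incident pair of strata from constancy of the invariants) is exactly the content of Gaffney's Sections 6--7, and nothing in your text supplies it. Two further cautions. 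First, the ``multiplicity--polar formula'' you invoke as the engine is an anachronism: it belongs to Gaffney's later work, whereas the 1993 proof runs through Teissier's equimultiplicity results for polar varieties and Gaffney's integral-closure criteria for the relevant Jacobian modules of the stable strata. Second, excellence is a hypothesis-side issue in Gaffney's theorem (it is part of the ``if and only if''), and your remark that it is ``forced because the zero-dimensional strata are counted by $d$'' conflates the general statement with the special reduction carried out later in this paper for $p=2n$ (Proposition \ref{propexcellent} and Theorem \ref{mainresult2}); in the generality of Theorem \ref{gaffneytheorem} the zero-stable invariants of \emph{all} zero-dimensional stable types enter, not only $d$. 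If the intent is merely to justify quoting the theorem, replace the sketch by the citation; if the intent is to prove it, essentially all of Gaffney's argument still has to be reproduced.
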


Let us return now to our case $p=2n$, with $n\geq 2$. Given a finitely determined map germ $f:(\mathbb{C}^n,0)\rightarrow (\mathbb{C}^{2n},0)$, let $F:(\mathbb{C}^n \times \mathbb{C},0)\rightarrow (\mathbb{C}^{2n} \times \mathbb{C},0)$, $F(\textbf{x},t)=(f_t(\textbf{x}),t)$, be an unfolding of $f$. Thus, in order to apply Theorem \ref{gaffneytheorem}, we have to control $n+2$ invariants, namely, $d(f_t)$, $m_0(f_t(\mathbb{C}^n))$, $\cdots, \ m_n(f_t(\mathbb{C}^n))$. 

Of course, we should also control the condition that says that the unfolding has to be good, but this will be controlled by the constancy of the invariant $d(f_t)$. In the remainder of this section, we will see that the polar multiplicities $m_k(f_t(\mathbb{C}^n)$ are related, so that it is possible to reduce the number of invariants. The following proposition shows how we can calculate the polar multiplicities in terms of the Milnor number of some apropriated ICIS (isolated complete intersection singularity).

\begin{proposition}\label{propformulas} Let $f:(\mathbb{C}^n,0)\rightarrow (\mathbb{C}^{p},0)$ be a finitely determined map germ, where $p=2n$ or $2n-1$. Let $l_1,\cdots,l_{n+1}:\mathbb{C}^{2n}\rightarrow \mathbb{C}$ be generic linear forms such that if $p_k=(l_1,\cdots,l_{n-k+1})$, then 

\begin{center}
$P_k(f(\mathbb{C}^n))=\overline{\displaystyle \sum (p_k|_{f(\mathbb{C}^n) \setminus \lbrace 0 \rbrace})}$
\end{center}

\noindent and $m_k(f(\mathbb{C}^n))$, is the local degree at $0$ of $p_{k+1}|_{P_k(f(\mathbb{C}^n))}:P_k(f(\mathbb{C}^n))\rightarrow \mathbb{C}^{n-k}$, for $k=0,\cdots,n-1$.
 
Also, let $F:(\mathbb{C}^n \times \mathbb{C},0)\rightarrow (\mathbb{C}^{2n} \times \mathbb{C},0)$ be a stabilization of $f$. Assume that

\begin{center}
$P_n(F(\mathbb{C}^{n+1}),\pi)=\overline{\displaystyle \sum ((p_n,\pi)|_{F(\mathbb{C}^{n+1}) \setminus \lbrace F(D(F)) \rbrace})}$
\end{center}

Set $X_k=V(l_1 \circ f,\cdots,l_{n-k+1} \circ f)$. Then\\

\noindent (a) The stable and polar multiplicities of $f(\mathbb{C}^n)$ can be calculated as

\begin{flushleft}
(a.1) $m_0(f(\mathbb{C}^n))=1+\mu(X_1)$,

(a.2) $m_k(f(\mathbb{C}^n))=\mu(X_k)+\mu(X_{k+1})$, for $k=1,\cdots,n-1$, 

(a.3) $m_n(f(\mathbb{C}^n))=\mu(X_n)$.
\end{flushleft}

\noindent (b) We have the alternating sum of polar multiplicities

\begin{center}
$m_0(f(\mathbb{C}^n))-m_1(f(\mathbb{C}^n))+m_2(f(\mathbb{C}^n))-m_3(f(\mathbb{C}^n))+\cdots+(-1)^n m_n(f(\mathbb{C}^n))=1.$
\end{center}

\end{proposition}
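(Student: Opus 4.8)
The plan is to prove parts (a) and (b) in that order, and to deduce (b) quickly from (a). The heart of the argument is a dimension/linkage computation for the polar varieties. First I would set up the geometry: since $f$ is finitely determined and $n \le p$, $f$ is an injective immersion away from $0$, so $f(\mathbb{C}^n) \setminus \{0\}$ is smooth of dimension $n$, and $f : (\mathbb{C}^n,0) \to (f(\mathbb{C}^n),0)$ is the normalization. A generic linear form $l_1$ cuts $f(\mathbb{C}^n)$ in a variety with isolated singularity at $0$, and pulling back by $f$ gives the hypersurface $X_1 = V(l_1 \circ f) \subset (\mathbb{C}^n,0)$, which (for generic $l_1$) has an isolated singularity, hence a well-defined Milnor number $\mu(X_1)$; more generally $X_k = V(l_1 \circ f, \dots, l_{n-k+1}\circ f)$ is an ICIS of dimension $k-1$ for generic choices. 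The key local observation is that the restriction $p_{k+1}|_{P_k(f(\mathbb{C}^n))}$ whose local degree computes $m_k(f(\mathbb{C}^n))$ can be transported through the normalization $f$, so $m_k(f(\mathbb{C}^n))$ equals the multiplicity of the polar variety of the hypersurface-type singularity $X_{?}$ downstairs.

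For (a.1), the standard fact (cf.\ Teissier, L\^e--Teissier) is that $m_0$ of a hypersurface with isolated singularity $V(g)$ equals $1 + \mu(V(g))$ when one counts the polar multiplicity as the number of critical points of a generic linear projection appearing near $0$ in a Morsification; applied to the image via the normalization $f$, with $g = l_1 \circ f$ up to the normalization, this gives $m_0(f(\mathbb{C}^n)) = 1 + \mu(X_1)$. For the intermediate cases (a.2), I would use the classical relation between successive polar multiplicities of an ICIS $X_k$: namely $m_k$ is computed by a generic linear projection of $P_k$, and a L\^e--Greuel type formula relates this local degree to $\mu(X_k) + \mu(X_{k+1})$ — the alternating-sum structure of L\^e--Greuel ($\mu(X_k) + \mu(X_{k+1})$ equals a colength that is exactly the polar multiplicity $m_k$). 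Part (a.3) is the base of the induction: $P_n(f(\mathbb{C}^n))$ is (the closure of) all of $f(\mathbb{C}^n)$ (no projection is imposed, $k=n$ meaning codimension $n$, i.e.\ a $0$-dimensional polar locus relative to the $n{+}1$ st form), and its multiplicity is the colength of the Jacobian-type ideal cutting the curve $X_n = V(l_1\circ f,\dots,l_1\circ f)$ ... more precisely $X_n$ is a curve (ICIS of dimension $1$? — reconcile indices: $X_n = V(l_1 \circ f)$ has dimension $n-1$), whose $\delta$ or $\mu$ invariant gives $m_n$. I would double-check the index bookkeeping so that $X_k$ has dimension $k-1$ and $X_n$ has dimension $n-1$, and that $m_n(f(\mathbb{C}^n))$ is the colength computing $\mu$ of the appropriate complete intersection; this follows from the stabilization hypothesis and the identification $m_n(\overline X) = m_n(\overline Y, \pi)$ recalled before the statement, combined with Proposition~\ref{mainresult1}-style length arguments.

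For (b), the telescoping is immediate from (a): substituting (a.1)--(a.3) into the alternating sum,
\[
m_0 - m_1 + m_2 - \cdots + (-1)^n m_n
= \bigl(1 + \mu(X_1)\bigr) - \bigl(\mu(X_1)+\mu(X_2)\bigr) + \cdots + (-1)^n \mu(X_n),
\]
and every $\mu(X_k)$ for $k=1,\dots,n$ appears exactly twice with opposite signs (once from the $m_{k-1}$ term, once from the $m_k$ term), so all cancel and only the leading $1$ survives. This is a purely formal manipulation once (a) is in hand.

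The main obstacle will be (a.2): proving rigorously that the polar multiplicity $m_k(f(\mathbb{C}^n))$ of the image equals $\mu(X_k) + \mu(X_{k+1})$. This requires (i) transporting the polar variety computation through the finite birational map $f$ (the normalization), checking that genericity of the $l_i$ upstairs and downstairs match and that no multiplicity is lost or gained at the singular point $0$, and (ii) identifying the local degree of $p_{k+1}|_{P_k}$ with the L\^e--Greuel colength $\dim_{\mathbb C} \mathcal O_n / (l_1\circ f, \dots, l_{n-k}\circ f, \text{minors})$ — i.e.\ recognizing the polar locus as the critical locus of $(l_1\circ f,\dots,l_{n-k+1}\circ f)$ restricted to $X_k$, which by L\^e--Greuel has colength $\mu(X_k)+\mu(X_{k+1})$. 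Establishing the genericity statements carefully (that for a residual set of linear projections all the $X_k$ are ICIS and the polar varieties have the expected dimension and reducedness — which is exactly the "well behaved" clause recalled before the statement, attributable to Gaffney--Massey) is the technical crux; once that is granted, each step is a known formula applied in the right order.
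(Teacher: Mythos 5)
Your overall route---transport the polar varieties of the image through $f$ (which is the normalization of its image, being a finite injective immersion off the origin), identify each local degree $m_k$ with a L\^e--Greuel colength upstairs, and obtain (b) by telescoping (a)---is exactly the strategy of the proof the paper relies on: the paper's own proof is a one-line remark that the argument of \cite[Th. 5]{victorjuanjo} for $p=2n-1$ carries over, and that argument is the normalization-plus-L\^e--Greuel computation you sketch. Your treatment of (a.2) and (b) is essentially right: off the origin $f^{-1}(P_k(f(\mathbb{C}^n)))$ is the critical locus of $(l_1\circ f,\dots,l_{n-k+1}\circ f)$, $m_k$ becomes the local degree of $(l_1\circ f,\dots,l_{n-k}\circ f)$ on that determinantal (hence, for generic $l_i$, Cohen--Macaulay of dimension $n-k$) critical space, the resulting colength is $\mu(X_k)+\mu(X_{k+1})$ by L\^e--Greuel, and the cancellation in (b) is purely formal.

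There are, however, two genuine gaps. First, your justification of (a.1) invokes a false ``standard fact'': the multiplicity of a hypersurface with isolated singularity is not $1+\mu$ in general (the cusp has $m_0=2$ and $\mu=2$), and the image here is not a hypersurface anyway. The correct argument is a plain degree count: $m_0(f(\mathbb{C}^n))$ is the local degree of $p_1=(l_1,\dots,l_n)$ restricted to the image, and since $f$ is finite and generically one-to-one onto its image this equals $\deg(p_1\circ f)=\dim_{\mathbb{C}}\mathcal{O}_n/\langle l_1\circ f,\dots,l_n\circ f\rangle=1+\mu(X_1)$, because $X_1$ is a zero-dimensional ICIS. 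Second, (a.3) is where the content specific to this setting lies and your sketch does not prove it: $m_n$ is Gaffney's stable multiplicity, computed from the relative polar curve $P_n(F(\mathbb{C}^{n+1}),\pi)$ of a stabilization, not from ``the closure of all of $f(\mathbb{C}^n)$ with no projection imposed'' as you write. One must show that for $t\neq 0$ the fibre of this relative polar curve over $t$ consists of the (Morse) critical points of $l_1|_{f_t(\mathbb{C}^n)}$ away from the double point set, which correspond under the immersion $f_t$ to the critical points of $l_1\circ f_t$, and that their number equals $\mu(l_1\circ f)=\mu(X_n)$ by conservation of the Milnor number in the family; an appeal to ``Proposition \ref{mainresult1}-style length arguments'' does not accomplish this. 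Finally, the index bookkeeping you leave unresolved must be fixed ($X_k$ has dimension $k-1$ and $X_n=V(l_1\circ f)$ is the hypersurface pullback of a single generic hyperplane), since it is exactly this indexing that makes L\^e--Greuel yield $\mu(X_k)+\mu(X_{k+1})$ and makes the telescoping in (b) terminate at $1$.
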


\begin{proof} The proof is essentially the same presented in \cite[Th. 5]{victorjuanjo} in the case of map germs from $(\mathbb{C}^n,0)$ to $(\mathbb{C}^{2n-1},0)$.\end{proof}\\

Now we are able to reduce the number of invariants necessary to control the Whitney equisingularity of $F$.

\begin{theorem}\label{mainresult2} Let $f:(\mathbb{C}^n,0)\rightarrow (\mathbb{C}^{2n},0)$ be a finitely determined map germ and consider an unfolding $F:(\mathbb{C}^n \times \mathbb{C},0)\rightarrow (\mathbb{C}^{2n} \times \mathbb{C},0)$, $F(\textbf{x},t)=(f_t(\textbf{x}),t)$. Then the following statements are equivalent.

\begin{flushleft}
(a) $F$ is Whitney equisingular.\\
(b) $d(f_t)$ and all polar multiplicities of even index $m_0(f_t(\mathbb{C}^{n}))$, $m_2(f_t(\mathbb{C}^{n}))$, $\cdots$ are constant.\\
(c) $d(f_t)$ and all polar multiplicities of odd index $m_1(f_t(\mathbb{C}^{n}))$, $m_3(f_t(\mathbb{C}^{n}))$, $\cdots$ are constant.
\end{flushleft}

\end{theorem}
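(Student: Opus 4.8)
\textbf{Proof proposal for Theorem \ref{mainresult2}.}

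The plan is to use Gaffney's characterization (Theorem \ref{gaffneytheorem}) together with the relations among polar multiplicities established in Proposition \ref{propformulas} and the control of the ``good/excellent'' condition from Proposition \ref{propexcellent}. First I would note that by Proposition \ref{propexcellent}, the constancy of $d(f_t)$ already guarantees that $F$ is an excellent unfolding; in particular $F$ is good, so Theorem \ref{gaffneytheorem} applies. Since $p = 2n$, the only strata defined by stable types are the (open) stable stratum and the closed stratum given by the image $f_t(\mathbb{C}^n)$ in the target together with its preimage in the source, plus the double-point locus; the zero-dimensional stable invariant of the double-point type is precisely $d(f_t)$, and the polar multiplicities attached to the top-dimensional stratum are $m_0(f_t(\mathbb{C}^n)), \dots, m_n(f_t(\mathbb{C}^n))$. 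Hence by Theorem \ref{gaffneytheorem}, statement (a) is equivalent to: $d(f_t)$ and all of $m_0(f_t(\mathbb{C}^n)), \dots, m_n(f_t(\mathbb{C}^n))$ are constant. (A preliminary step is to argue that there are no other strata of positive dimension in source or target carrying further polar multiplicities; for $p = 2n$ finitely determined germs the only stable types are immersions and transverse double points, so this is immediate.)

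It then remains to show that, given constancy of $d(f_t)$, constancy of the even-index polar multiplicities is equivalent to constancy of \emph{all} polar multiplicities, and likewise for the odd-index ones. This is where Proposition \ref{propformulas}(a) enters: it expresses each $m_k(f_t(\mathbb{C}^n))$ in terms of Milnor numbers of the ICIS $X_k^{(t)} := V(l_1 \circ f_t, \dots, l_{n-k+1}\circ f_t)$, namely $m_0 = 1 + \mu(X_1)$, $m_k = \mu(X_k) + \mu(X_{k+1})$ for $1 \le k \le n-1$, and $m_n = \mu(X_n)$. I would run the following telescoping argument. Suppose the even-index $m_k(f_t(\mathbb{C}^n))$ are constant. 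From $m_0 = 1 + \mu(X_1)$ we get $\mu(X_1^{(t)})$ constant. From $m_2 = \mu(X_2) + \mu(X_3)$, $m_4 = \mu(X_4)+\mu(X_5)$, etc., we would like to peel off the individual Milnor numbers; to do so I need the upper-semicontinuity of the Milnor number in the family (each $\mu(X_k^{(t)})$ is upper semicontinuous in $t$, taking its maximal value at $t=0$), so that a sum of such quantities is constant if and only if each summand is. Once $\mu(X_k^{(t)})$ is constant for every $k$, all $m_k(f_t(\mathbb{C}^n))$ are constant by Proposition \ref{propformulas}(a), giving (a). The reverse implications (a) $\Rightarrow$ (b) and (a) $\Rightarrow$ (c) are trivial. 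The argument for odd indices is symmetric, starting from $m_n = \mu(X_n)$ (or from $m_1 = \mu(X_1) + \mu(X_2)$ together with $\mu(X_1)$ recovered via some other relation) and working down; the alternating-sum identity in Proposition \ref{propformulas}(b) provides the bookkeeping check that the two halves are genuinely interchangeable.

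The main obstacle I anticipate is the step where constancy of a \emph{sum} $\mu(X_k^{(t)}) + \mu(X_{k+1}^{(t)})$ must be upgraded to constancy of each term. This is not purely formal: it relies on the fact that the $X_k^{(t)}$ form a flat family of ICIS (which follows from $F$ being a stabilization / excellent unfolding and the $l_i$ being generic, so the $X_k$ deform with isolated singularities) and on the Lê--Greuel type upper-semicontinuity of $\mu$ for ICIS in families. I would need to state carefully that for generic linear forms the sections $X_k^{(t)}$ indeed have isolated singularities for all small $t$ (genericity can be chosen uniformly in $t$ on a suitable representative), so that $\mu(X_k^{(t)}) < \infty$ and is upper semicontinuous. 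Granting that, the telescoping is clean; but I should also double-check the boundary indices, e.g. ensuring that when $n$ is even the chain of even indices ends at $m_n = \mu(X_n)$, pinning down $\mu(X_n^{(t)})$ directly and then cascading down through $m_{n-2} = \mu(X_{n-2}) + \mu(X_{n-1})$ after $\mu(X_{n-1})$ has been recovered --- so the order of peeling matters and depends on the parity of $n$. This case analysis on the parity of $n$, together with the semicontinuity input, is the real content of the proof; everything else reduces to citing Theorem \ref{gaffneytheorem}, Proposition \ref{propexcellent}, and Proposition \ref{propformulas}.
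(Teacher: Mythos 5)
Your reduction of (b) or (c) to (a) is essentially sound and close in spirit to the paper's: constancy of $d(f_t)$ gives excellence (hence goodness) via Proposition \ref{propexcellent}, so Theorem \ref{gaffneytheorem} applies, and the remaining task is to pass from half of the polar multiplicities to all of them. Where the paper does this in one stroke using the alternating-sum identity of Proposition \ref{propformulas}(b) together with upper semicontinuity of the polar multiplicities themselves (a constant sum of semicontinuous nonnegative terms forces each term to be constant), you telescope through the Milnor numbers via Proposition \ref{propformulas}(a); that also works, granted the uniform genericity of the linear forms in $t$ and the semicontinuity of $\mu$ for the families $X_k^{(t)}$, but it is a heavier route to the same conclusion and forces the parity case analysis you describe, which the alternating sum disposes of at once.

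The genuine gap is in the direction (a) $\Rightarrow$ (b), which you declare trivial. Theorem \ref{gaffneytheorem} is not an unconditional equivalence: it presupposes that $F$ is a \emph{good} unfolding. Assuming only (a), you do not yet know that $F$ is good, so you cannot invoke Gaffney's theorem to extract the constancy of $d(f_t)$ and of the multiplicities. This is precisely where the paper's proof does real work: it uses the topological triviality furnished by Whitney equisingularity (the homeomorphisms $H$ and $G$ trivializing $F$ over the parameter axis) to verify condition (a.1) of Definition \ref{defgood}, showing $F^{-1}(T)=T$, and then a stratification argument to rule out an arc of unstable points violating condition (a.2): such an arc would force a one-dimensional singular stratum distinct from the $T$-axis, contradicting the existence of a regular stratification with $T$ as a stratum. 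Only after goodness is established can Theorem \ref{gaffneytheorem} be applied to conclude (b) and (c). Your proposal needs this argument (or some substitute showing that Whitney equisingularity of $F$ forces $F$ to be good) to be complete; without it, the implication (a) $\Rightarrow$ (b) is unsupported.
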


\begin{proof} The equivalence $(b) \Leftrightarrow (c)$ follows by the alternating sum of polar multiplicities (Prop. \ref{propformulas}) and the upper semicontinuity of the polar multiplicities.

(b) $\Rightarrow $ (a) Since $d(f_t)$ is constant, by Proposition \ref{propexcellent} we have that $F$ is excellent, in particular it is also good. By Proposition \ref{propformulas} (b), the constancy of all multiplicities of even index $m_0(f_t(\mathbb{C}^{n}))$, $m_2(f_t(\mathbb{C}^{n}))$, $\cdots$ implies the constancy of all multiplicities of odd index $m_1(f_t(\mathbb{C}^{n}))$, $m_3(f_t(\mathbb{C}^{n}))$, $\cdots$. So, by Theorem \ref{gaffneytheorem} we have that $F$ is Whitney equisingular.

(a) $\Rightarrow $ (b) Suppose that $F$ is Whitney equisingular and we will show that $F$ must be good. Since $F$ is Whitney equisingular, in particular it is topologically trivial, that is, there are germs of homeomorphisms:
\[
H :(\mathbb{C}^n\times \mathbb{C},0)\rightarrow (\mathbb{C}^n \times \mathbb{C},0),  \ G(x,t)=(H_{t}(\textbf{x}),t), \ H_{0}(\textbf{x})=\textbf{x}, \ H_{t}(\textbf{0})=\textbf{0}, 
\]
\[
G:(\mathbb{C}^{2n} \times \mathbb{C},0)\rightarrow (\mathbb{C}^{2n} \times \mathbb{C},0), \ G(x,t)=(G_{t}(x),t), \ G_{0}(\textbf{x})=\textbf{x}, \ G_{t}(\textbf{0})=\textbf{0},
\]
\noindent such that the diagram (\ref{eq1}) below

\begin{equation}\label{eq1}
\begin{gathered}
\xymatrix{  (\mathbb{C}^n \times \mathbb{C},0) \ar[r]^-F \ar[d]_-H  & (\mathbb{C}^{2n} \times \mathbb{C},0) \ar[d]_-G  \\
                          (\mathbb{C}^n \times \mathbb{C},0) \ar[r]^-I   &  (\mathbb{C}^{2n} \times \mathbb{C},0)}
\end{gathered}
\end{equation}
 
\noindent is commutative, where $I(\textbf{x},t)=(f(\textbf{x}),t)$ is the trivial unfolding of $f$. 

Choose a representative $F:U \times T \rightarrow V \times T$ of $F$, where $U$, $V$ and $T$ are an open neighborhoods of $0$ in $\mathbb{C}^n$, $\mathbb{C}^{2n}$ and $\mathbb{C}$, respectively. Suppose also that we choose $U$ in such a way that $f:U \rightarrow \mathbb{C}^{2n}$ is a representative such that $f^{-1}(0)=\lbrace 0 \rbrace$ and $f:U \setminus \lbrace 0 \rbrace$ is a stable immersion. Thus, we have that $I^{-1}(T)=T$. Since $H,G$ are homeomorphisms, it follows that $F^{-1}(T)=(H^{-1}\circ I^{-1} \circ G)(T)=T$, therefore we have that condition (a.1) of Definition \ref{defgood} is satisfied. 
On the other hand, the existence of an arc $\alpha(u)=(\alpha_1(u),\cdots,\alpha_{2n}(u),\alpha_{2n+1}(u))$ violating the condition $(a.2)$ of Definition \ref{defgood} must be an arc such that the restriction $f_{t}:(U \setminus 0)\rightarrow V$ is not stable in the set of points $S=\lbrace f_t^{-1}(\alpha_1(u),\cdots,\alpha_{2n}(u)) \rbrace$, for any $t\neq 0$. Since every instable point of $f_{t}:(U \setminus 0)\rightarrow V$ is a singular point of $f_t(U)$, and hence a singular point of $F(U \times T)$, we must give a one-dimensional stratum not equal to the $T$ axis. Hence, a stratification of $F$ such that $T$ is a stratum does not exist, and therefore $F$ can not be Whitney equisingular, a contradiction. 
Since $F$ must be good, the result now follows by Theorem \ref{gaffneytheorem}.
\end{proof}\\

If $f$ has corank $k < n$, then the number of invariants necessary to control the Whitney equisingularity of $F$ in Theorem \ref{mainresult2} can be reduced, as we will see in the next result. In the following, 

\begin{center}
$odd(x)= 2 \left\lceil\dfrac{x+1}{2}\right\rceil -1 $, $ \ \ \ $ $\displaystyle \left(respect. \ even(x)= 2 \left\lfloor\dfrac{x-1}{2}\right\rfloor \right) $, 
\end{center}

\noindent is the function that takes as input a real number $x$ and gives as output the greatest odd integer (respect. greatest even integer) less than or equal to $x$. Clearly we have that if $k$ is an even integer, then $odd(k)=k-1$ and $even(k)=k$. On the other hand, if $k$ is an odd integer, then $odd(k)=k$ and $even(k)=k-1$.

\begin{theorem}\label{mainresult3} Let $f:(\mathbb{C}^n,0)\rightarrow (\mathbb{C}^{2n},0)$ be a finitely determined map germ of corank $k$ and consider an unfolding $F:(\mathbb{C}^n \times \mathbb{C},0)\rightarrow (\mathbb{C}^{2n} \times \mathbb{C},0)$, $F(\textbf{x},t)=(f_t(\textbf{x}),t)$. Then the following statements are equivalent.

\begin{flushleft}
(a) $F$ is Whitney equisingular.\\
(b) $d(f_t)$ and the polar multiplicities of even index $m_0(f_t(\mathbb{C}^{n}))$, $m_2(f_t(\mathbb{C}^{n}))$, $\cdots$, $m_{even(k)}(f_t(\mathbb{C}^n))$ are constant.\\
(c) $d(f_t)$ and all polar multiplicities of odd index $m_1(f_t(\mathbb{C}^{n}))$, $m_3(f_t(\mathbb{C}^{n}))$, $\cdots, m_{odd(k)}(f_t(\mathbb{C}^{n}))$ are constant.
\end{flushleft}

\end{theorem}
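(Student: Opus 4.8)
The plan is to derive Theorem \ref{mainresult3} from Theorem \ref{mainresult2} by showing that the corank hypothesis forces every polar multiplicity of index exceeding $k$ to vanish identically along the family, so that demanding its constancy is vacuous. Concretely, I would first note that statement (b) of Theorem \ref{mainresult2} requires the constancy of $d(f_t)$ together with $m_0(f_t(\mathbb{C}^n)),m_2(f_t(\mathbb{C}^n)),\dots$; once it is known that $m_j(f_t(\mathbb{C}^n))=0$ for all $j>k$ and all small $t$, the multiplicities of even index $>k$ are constant (equal to $0$) automatically, so (b) of Theorem \ref{mainresult2} becomes equivalent to the constancy of $d(f_t)$ and $m_0(f_t(\mathbb{C}^n)),m_2(f_t(\mathbb{C}^n)),\dots,m_{even(k)}(f_t(\mathbb{C}^n))$, which is exactly statement (b) here. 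The same observation applied to the odd-index multiplicities identifies statement (c) here with statement (c) of Theorem \ref{mainresult2}. Hence $(a)\Leftrightarrow(b)\Leftrightarrow(c)$ would follow immediately from Theorem \ref{mainresult2}.

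It remains to establish the vanishing $m_j(f_t(\mathbb{C}^n))=0$ for $j>k$. I would prove it first for $f=f_0$ using Proposition \ref{propformulas}(a): with $X_j=V(l_1\circ f,\dots,l_{n-j+1}\circ f)$ for generic linear forms $l_i$ on $\mathbb{C}^{2n}$, one has $m_n(f(\mathbb{C}^n))=\mu(X_n)$ and $m_j(f(\mathbb{C}^n))=\mu(X_j)+\mu(X_{j+1})$ for $1\le j\le n-1$. Now $X_j$ is cut out by $n-j+1$ functions whose linear parts are the covectors $l_i\circ \mathrm{d}f(0)$ on $\mathbb{C}^n$. Since $f$ has corank $k$, $\mathrm{d}f(0)$ has rank $n-k$, and as $l$ runs over $(\mathbb{C}^{2n})^{\vee}$ the covector $l\circ\mathrm{d}f(0)$ ranges over the full $(n-k)$-dimensional space of linear forms on $\mathbb{C}^n$ vanishing on $\ker\mathrm{d}f(0)$; hence a generic choice of $n-j+1$ of them is linearly independent as soon as $n-j+1\le n-k$, i.e. $j\ge k+1$. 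For such $j$ the germ $X_j$ is smooth of dimension $j-1$ at $0$, so $\mu(X_j)=0$; feeding this into the formulas above (and using $n>k$ to get $\mu(X_n)=0$) yields $m_j(f(\mathbb{C}^n))=0$ for every $j>k$.

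To pass from $f_0$ to the whole family I would use the lower semicontinuity of the rank: $\mathrm{corank}\,\mathrm{d}f_t(0)\le\mathrm{corank}\,\mathrm{d}f_0(0)=k$ for $t$ in a neighbourhood of $0$, so the argument of the previous paragraph applies verbatim to each $f_t$ and gives $m_j(f_t(\mathbb{C}^n))=0$ for $j>k$ and $t$ small. The generic forms $l_i$ may be taken to depend on $t$; since both $m_j(f_t(\mathbb{C}^n))$ and the smoothness of the corresponding $X_j(f_t)$ depend on the $l_i$ only through a Zariski-open condition, this is harmless. This returns us to the situation of the first paragraph and finishes the proof.

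The main obstacle — in fact the only non-bookkeeping ingredient — is the linear-algebra fact controlling when $X_j$ is smooth: that $n-j+1$ generic linear forms on the target pull back, via $\mathrm{d}f(0)$, to linearly independent forms on the source precisely when $n-j+1\le n-k$. Everything else is a matter of substituting this into Proposition \ref{propformulas} and quoting Theorem \ref{mainresult2}. If one prefers not to invoke Theorem \ref{mainresult2} as a black box for $(b)\Leftrightarrow(c)$, this equivalence can also be obtained directly: the alternating sum $\sum_{j=0}^{n}(-1)^j m_j(f_t(\mathbb{C}^n))=1$ of Proposition \ref{propformulas}(b) now truncates at $j=k$, and combined with the upper semicontinuity of the polar multiplicities it shows that constancy of the even-index multiplicities up to $m_{even(k)}$ forces constancy of the odd-index ones up to $m_{odd(k)}$, and conversely.
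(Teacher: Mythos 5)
Your proposal is correct and follows essentially the same route as the paper: reduce to Theorem \ref{mainresult2} by showing that the slices $X_j$ are smooth for $j\geq k+1$, hence $\mu(X_j)=0$ and, via Proposition \ref{propformulas}, $m_j(f_t(\mathbb{C}^n))=0$ for $j>k$. The paper gets the smoothness from the corank-$k$ prenormal form $f=(x_1,\dots,x_{n-k},f_{n-k+1},\dots,f_{2n})$ rather than your direct rank argument on $l\circ \mathrm{d}f(0)$, but this is the same underlying fact; your explicit appeal to semicontinuity of the corank to treat all $f_t$ is a detail the paper leaves implicit.
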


\begin{proof} Since $f$ has corank $k\geq 1$, after a suitable change of coordinates we can write $f$ in the following form:

\begin{equation}\label{eq6}
f(x_1,\cdots,x_n)=(x_1,\cdots,x_{n-k},f_{(n-k)+1}(\textbf{x}),\cdots,f_{2n}(\textbf{x})).
\end{equation}

Let $l_1,\cdots,l_{n+1}:\mathbb{C}^{2n}\rightarrow \mathbb{C}$ be generic linear forms as in Proposition \ref{propformulas}. With the same notation used in Proposition \ref{propformulas}, we have that $X_i=V(l_1 \circ f, \cdots, l_{n-i+1} \circ f)$. So, for each $i\geq k+1$ we have the following isomorphisms between local rings

\begin{center}

$\mathcal{O}_{X_i} \simeq \dfrac{\mathbb{C}\lbrace x_1,\cdots,x_n \rbrace}{\langle l_1 \circ f, \cdots, l_{n-i+1} \circ f \rangle} \simeq \mathbb{C}\lbrace x_{(n-k)+1},\cdots,x_n \rbrace$,
\end{center}

\noindent where the second isomorphism follows by (\ref{eq6}). 

Hence, $(X_i,0)$ is smooth for $i=k+1,\cdots,n$, we have that $\mu(X_i)=0$, for $i=k+1,\cdots,n$. By Proposition \ref{propformulas}, we have that $m_i(f_t(\mathbb{C}^n))=0$ for $i=k+1,\cdots,n$. So, we just need to consider the polar multiplicities $m_0(f_t(\mathbb{C}^n)),\cdots,m_k(f_t(\mathbb{C}^n))$. Now, the result follows by Theorem \ref{mainresult2}.\end{proof}\\

If $f$ has corank $1$ or $2$, then we need only two invariants to control the Whitney equisingularity of $F$.

\begin{corollary} \label{corollary1} Let $f:(\mathbb{C}^n,0)\rightarrow (\mathbb{C}^{2n},0)$ be a finitely determined map germ of corank $k \leq 2$ and consider an unfolding $F:(\mathbb{C}^n \times \mathbb{C},0)\rightarrow (\mathbb{C}^{2n} \times \mathbb{C},0)$, $F(\textbf{x},t)=(f_t(\textbf{x}),t)$. Then the following statements are equivalent.

\begin{flushleft}
(a) $F$ is Whitney equisingular.\\
(b) $d(f_t)$ and $m_1(f_t(\mathbb{C}^{n}))$ are constant.
\end{flushleft}

\end{corollary}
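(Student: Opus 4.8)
The plan is to derive this directly from Theorem \ref{mainresult3} by specializing to the cases $k=1$ and $k=2$. First I would observe that in both cases the list of even-index polar multiplicities appearing in part (b) of Theorem \ref{mainresult3} reduces to a single entry. Indeed, when $k=1$ we have $even(k)=k-1=0$, so Theorem \ref{mainresult3}(b) says that $d(f_t)$ and $m_0(f_t(\mathbb{C}^n))$ are constant; when $k=2$ we have $even(k)=k=2$, so the even list is $m_0(f_t(\mathbb{C}^n)), m_2(f_t(\mathbb{C}^n))$. Meanwhile, for $k=1$, part (c) of Theorem \ref{mainresult3} says that $d(f_t)$ and $m_1(f_t(\mathbb{C}^n))$ are constant since $odd(1)=1$; and for $k=2$, part (c) gives $odd(k)=k-1=1$, so again only $d(f_t)$ and $m_1(f_t(\mathbb{C}^n))$. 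Thus in both corank cases, statement (c) of Theorem \ref{mainresult3} is exactly statement (b) of the corollary.

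The second step is simply to invoke the equivalence $(a) \Leftrightarrow (c)$ of Theorem \ref{mainresult3}: $F$ is Whitney equisingular if and only if $d(f_t)$ and $m_1(f_t(\mathbb{C}^n))$ are constant. This completes the proof, since the hypothesis $k \leq 2$ covers exactly $k=0,1,2$, and for $k=0$ (i.e.\ $f$ an immersion germ) the map germ is stable, so $d(f_t)=0$ and $F$ is automatically excellent and Whitney equisingular, consistent with the statement. One should remark that for $k=0$ one could instead note that there are no nontrivial polar strata and the statement is vacuous in the interesting direction.

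I do not expect any real obstacle here, as this is a direct corollary of the preceding theorem; the only point requiring care is the bookkeeping with the functions $odd$ and $even$. Specifically, I would double-check using the identities recorded just before Theorem \ref{mainresult3}: if $k$ is odd then $odd(k)=k$ and $even(k)=k-1$, and if $k$ is even then $odd(k)=k-1$ and $even(k)=k$. For $k=1$ (odd): $odd(1)=1$, $even(1)=0$; for $k=2$ (even): $odd(2)=1$, $even(2)=2$. In every case the odd-index list terminates at index $1$, i.e.\ consists solely of $m_1(f_t(\mathbb{C}^n))$, which is the assertion. Hence the corollary follows at once from Theorem \ref{mainresult3}.
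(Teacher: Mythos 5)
Your proposal is correct and follows exactly the route the paper intends: the corollary is stated as an immediate consequence of Theorem \ref{mainresult3}, obtained by noting that $odd(1)=odd(2)=1$ so that the odd-index list in part (c) reduces to $m_1(f_t(\mathbb{C}^n))$ alone. Your bookkeeping with $odd$ and $even$, and the remark on the trivial corank-$0$ case, are all consistent with the paper.
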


As an application of Proposition \ref{propformulas} we will show how we can calculate the Euler obstruction of the image of a map germ from $(\mathbb{C}^n,0)$ to $(\mathbb{C}^{2n},0)$. The local Euler obstruction for nonsingular varieties, introduced by R. MacPherson in \cite{macpherson}, in a purely obstructional way is a topological invariant that is also associated to the polar multiplicities. The local Euler obstruction plays an important role in his affirmative response to a conjecture of Deligne and Grothendieck on the existence of Chern class for singular complex algebraic varieties (see \cite{kennedy} and \cite{macpherson}.) Definitions equivalent to the one given by MacPherson's have been given by several authors. We cite for instance the one given in \cite{brasselet2}, see also \cite[pp. 38-42]{brasselet2}.

 Lê and Teissier in \cite{leteissier} showed that the local Euler obstruction is an alternating sum of the multiplicities of the local polar varieties. This is an important formula for computing the local Euler obstruction, and we use it in this paper as an alternative to the definition of the Euler obstruction of an analytic space $X$ at $0$, denoted by $Eu_0(X)$.

\begin{theorem}(\rm\cite[\textit{Cor.} \rm 5.1.4]{leteissier})\label{eulerobstrctionth} \textit{Let $X$ be an analytic space of dimension $d$ reduced at $0 \in \mathbb{C}^{n+1}$. Then}

\begin{center}
$Eu_0(X)= \displaystyle \sum_{i=0}^{d-1}(-1)^{d-i-1}m_{d-i-1}(X)$,
\end{center}

\noindent \textit{where $m_i(X)$ denote the polar multiplicity of the polar varieties $P_i(X)$}.
\end{theorem}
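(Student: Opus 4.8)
This is the polar formula of L\^e--Teissier, quoted here from \cite{leteissier}, so the task is really to recall the idea behind its proof rather than to reprove the general theory. The plan is an induction on $d=\dim X$, realizing $Eu_0(X)$ through the Nash modification and carrying the induction out by cutting with a generic hyperplane.

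First I would recall that, by MacPherson's definition (or the equivalent obstruction-theoretic description in \cite{brasselet2}), $Eu_0(X)$ is the obstruction to extending over $\nu^{-1}(0)$ a canonical lift to the Nash bundle $\widehat T$ on the Nash modification $\nu:\widehat X\to X$ of the radial vector field built from the squared distance to $0$. For $d=1$ this is computed directly: $\nu$ is the normalization of the curve, $\nu^{-1}(0)$ is one point per branch, and the local obstruction over a branch equals the multiplicity of that branch; hence $Eu_0(X)=e(X,0)=m_0(X)$, which is exactly the asserted formula when $d=1$.

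For $d\ge 2$ I would fix a generic hyperplane $H$ through $0$ and combine two ingredients. The first is a hyperplane-section comparison for the Euler obstruction: analysing the obstruction cocycle of $\widehat T$ along $\nu^{-1}(0)$ against the corresponding data for $X\cap H$ (using that $H$ meets every stratum of a Whitney stratification of $X\setminus\{0\}$, and the Nash data, transversally) expresses $Eu_0(X)$ in terms of $Eu_0(X\cap H)$ and the polar multiplicities of $X$, with a sign that alternates with the index. The second is Teissier's description of local polar varieties under a generic hyperplane section (the idealistic Bertini theorem underlying \cite{teissier}): for generic $H$ the polar varieties of $X\cap H$ are, up to the evident index shift, the sections by $H$ of the polar varieties of $X$, so their multiplicities at $0$ are the corresponding $m_j(X)$. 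Applying the induction hypothesis to $X\cap H$ (of dimension $d-1$) and substituting via this matching, the two alternating sums collapse to $\sum_{i=0}^{d-1}(-1)^{d-i-1}m_{d-i-1}(X)$.

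The main obstacle is the first ingredient: pinning down, with the correct sign, how the obstruction class of the Nash bundle transforms under a generic hyperplane section. This is the technical heart of L\^e--Teissier and demands a careful general-position analysis of the conormal and Nash geometry along the polar locus; the rest (the $d=1$ base case, the bookkeeping of the alternating sums, and the polar-section statement) is comparatively routine once that step is available. An alternative, if one is willing to import more machinery, is to bypass the induction altogether and use MacPherson's Chern class transformation together with Kennedy's identification of the characteristic cycle of $Eu_X$ \cite{kennedy}, from which the polar multiplicities drop out directly as the multiplicities of the conormal variety; but the hyperplane-section induction above is the most self-contained route.
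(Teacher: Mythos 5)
The paper does not prove this statement at all: it is imported verbatim from L\^e--Teissier \cite[Cor. 5.1.4]{leteissier} and used as a black box, so there is no internal proof to compare yours against. Your sketch does faithfully reproduce the strategy of the cited source: the obstruction-theoretic definition of $Eu_0(X)$ via the Nash modification, the base case $Eu_0(X)=m_0(X)$ for curves, induction on $d$ through a generic hyperplane section, and Teissier's transversality result that generic sections take polar varieties of $X$ to polar varieties of $X\cap H$ with the multiplicities $m_j(X)$ preserved (note that the paper's indexing, $\sum_{i=0}^{d-1}(-1)^{d-i-1}m_{d-i-1}(X)=\sum_{j=0}^{d-1}(-1)^{j}m_j(X)$, is consistent with your base case and with the surface case $m_0-m_1$). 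The alternative route you mention through MacPherson's transformation and Kennedy's identification of the characteristic cycle \cite{kennedy} is also a legitimate modern proof. The one caveat is the one you yourself flag: the comparison of $Eu_0(X)$ with $Eu_0(X\cap H)$ up to the polar-curve correction term, with the correct sign, is precisely the technical content of L\^e--Teissier, and your proposal names it rather than establishes it; so as a self-contained argument it is incomplete at exactly the theorem's core, but as an account of the quoted result's proof it is accurate and in the same spirit as the paper's treatment, which simply cites it.
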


\begin{corollary}\label{mainresult5} Let $f: (\mathbb{C}^n,0)\rightarrow (\mathbb{C}^{p},0)$ be a finitely determined map germ. Consider the following cases where $p=2n$, $p=2n-1$, with $n\geq 2$ or $(n,p)=(3,4)$. Let $H$ be a generic hyperplane in $\mathbb{C}^p$. Then,

\begin{center}
$Eu_0(f(\mathbb{C}^n))=1-(-1)^n\mu(f^{-1}(H \cap f(\mathbb{C}^n))$. 
\end{center}

\noindent In particular, if $f$ has corank $k\leq n-1$, then $Eu_0(f(\mathbb{C}^n))=1$.

\end{corollary}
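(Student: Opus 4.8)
The plan is to apply L\^e--Teissier's formula (Theorem \ref{eulerobstrctionth}) to $X=f(\mathbb{C}^n)$, which has dimension $d=n$, and then rewrite the resulting alternating sum of polar multiplicities using Proposition \ref{propformulas}. Concretely, Theorem \ref{eulerobstrctionth} gives
\begin{equation*}
Eu_0(f(\mathbb{C}^n))=\sum_{i=0}^{n-1}(-1)^{n-i-1}m_{n-i-1}(f(\mathbb{C}^n))=\sum_{k=0}^{n-1}(-1)^{k}m_{k}(f(\mathbb{C}^n)),
\end{equation*}
where I have reindexed by $k=n-i-1$. The point is that this alternating sum is \emph{almost} the one appearing in Proposition \ref{propformulas}(b), which reads $\sum_{k=0}^{n}(-1)^k m_k(f(\mathbb{C}^n))=1$; the only discrepancy is the missing top term $(-1)^n m_n(f(\mathbb{C}^n))$. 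Hence
\begin{equation*}
Eu_0(f(\mathbb{C}^n))=1-(-1)^n m_n(f(\mathbb{C}^n)).
\end{equation*}

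Next I would identify $m_n(f(\mathbb{C}^n))$ with the Milnor number in the statement. By Proposition \ref{propformulas}(a.3), $m_n(f(\mathbb{C}^n))=\mu(X_n)$, where $X_n=V(l_1\circ f,\dots,l_1\circ f)$ — more precisely $X_n=V(l_1\circ f)$ since $n-k+1=1$ when $k=n$. Thus $X_n=f^{-1}(l_1^{-1}(0))=f^{-1}(H\cap f(\mathbb{C}^n))$ for the generic hyperplane $H=l_1^{-1}(0)$, and $X_n$ is an ICIS (a hypersurface, in fact) with a well-defined Milnor number. This yields exactly
\begin{equation*}
Eu_0(f(\mathbb{C}^n))=1-(-1)^n\mu\bigl(f^{-1}(H\cap f(\mathbb{C}^n))\bigr).
\end{equation*}
For the cases $p=2n-1$ and $(n,p)=(3,4)$ one argues identically, invoking the analogous polar-multiplicity relations (the same Proposition \ref{propformulas} is stated for $p=2n$ or $2n-1$, and the $(3,4)$ case is covered by the cited \cite{victorjuanjo}).

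Finally, for the corank statement: if $f$ has corank $k\leq n-1$, then by the computation in the proof of Theorem \ref{mainresult3} we have $\mu(X_i)=0$ for $i=k+1,\dots,n$, since $X_i$ is smooth in that range (it is isomorphic to $\mathbb{C}\{x_{n-k+1},\dots,x_n\}$ via the normal form \eqref{eq6}). In particular $\mu(X_n)=0$, hence $Eu_0(f(\mathbb{C}^n))=1$.

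The main obstacle I anticipate is purely bookkeeping: making sure the indexing in L\^e--Teissier's formula (which is phrased with $X\subset\mathbb{C}^{n+1}$ and sums $m_{d-i-1}$) is correctly aligned with the convention for $m_k(f(\mathbb{C}^n))$ used in Proposition \ref{propformulas}, and checking that the genericity of $H$ is compatible between the two sources so that $X_n$ genuinely has an isolated singularity and its Milnor number is the one detected by the polar multiplicity $m_n$. No hard new idea is needed beyond combining the two cited results.
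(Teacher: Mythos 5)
Your argument is correct and is essentially the paper's own proof: both combine the L\^e--Teissier formula with Proposition \ref{propformulas} (the alternating sum (b) to trade the missing top term for $1-(-1)^n m_n$, then (a.3) to identify $m_n(f(\mathbb{C}^n))=\mu(X_n)$ with $X_n=V(l_1\circ f)=f^{-1}(H\cap f(\mathbb{C}^n))$), and the corank claim follows in both by the smoothness of $X_n$ from the normal form \eqref{eq6}. The only slip is bibliographic: the $(n,p)=(3,4)$ case is covered in the paper by \cite[Th. 3.5]{victor} (polar multiplicities for germs from $\mathbb{C}^3$ to $\mathbb{C}^4$), not by \cite{victorjuanjo}, which treats $p=2n-1$.
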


\begin{proof} Let $l:\mathbb{C}^p \rightarrow \mathbb{C}$ be a linear form, $l(x_1,\cdots,x_p)=a_1x_1+\cdots+a_px_p$, with $a_1,\cdots,a_p$ generic constants. The hyperplane $H=V(a_1x_1+\cdots+a_px_p)$ is generic. Set $X_n=V(f^{-1}(H \cap f(\mathbb{C}^n))$.  By Proposition \ref{propformulas} (see \cite[Th. 3.5]{victor} for the case where $(n,p)=(3,4)$) we have that

\begin{center}
$1-(-1)^n\mu(f^{-1}(H \cap f(\mathbb{C}^n))=1-(-1)^n m_n(f(\mathbb{C}^n))= \displaystyle \sum_{i=0}^{d-1}(-1)^{d-i-1}m_{d-i-1}(f(\mathbb{C}^n))=Eu_0(f(\mathbb{C}^n))$.
\end{center}

Now, if $f$ has corank $k\leq n-1$, then $X_n$ is smooth. Hence, $\mu(m_n(f(\mathbb{C}^n))=0$.\end{proof}

\begin{example} Consider the map germ $f(x,y)=(x^2,y^2,x^3+y^3+xy)$. We have that 

\begin{center}
$m_2(f(\mathbb{C}^2))=\mu(ax^2+by^2+c(x^3+y^3+xy))=1$, 
\end{center}

\noindent where $a,b,c$ are generic constants. So, $Eu_0(f(\mathbb{C}^2))=0$.

\end{example}

\begin{flushleft}
\textit{Acknowlegments:} Nuño-Ballesteros, J.J. is partially supported by Grant PID2021-124577NB-I00 funded by MCIN/AEI/10.13039/501100011033 and by ``ERDF A way of making Europe''. Silva, O.N. acknowledges support by FAPESP, grant 2020/10888-2 related to the Thematic project grant 2019/07316-0. This work was done during a stay of the second named author in the Universidade Federal de São Carlos (Brazil). He is grateful especially to the Departament of Mathematics by their hospitality and support.
\end{flushleft}

\newpage

\small

\begin{flushleft}
$\bullet$ Nuño-Ballesteros, J.J.\\
\textit{juan.nuno@uv.es}\\
Departament de Geometria i Topologia, Universitat de València, Campus de Burjassot, 46100 Burjassot, Spain and Universidade Federal da Paraíba, 58.051-900, João Pessoa, PB, Brazil.\\

$ \ \ $\\

$\bullet$ Silva, O.N.\\
\textit{otoniel.silva@academico.ufpb.br}\\
Universidade Federal da Paraíba, 58.051-900, João Pessoa, PB, Brazil.\\

$ \ \ $\\

$\bullet$ Tomazella, J.N.\\
\textit{jntomazella@ufscar.br}\\
Universidade Federal de São Carlos, Caixa Postal 676, 13560-905 São Carlos, SP, Brazil.

\end{flushleft}

\end{document}